\title{Links on Incompressible Surfaces and Volumes}
\author{Corbin Reid}
\newcommand{\bdy}{\partial}   
\newcommand{\NN}{\mathbb{N}}  
\DeclareMathOperator{\Vol}{Vol}
\newtheorem{theorem}{Theorem}[section]
\newtheorem{proposition}[theorem]{Proposition}
\newtheorem{lemma}[theorem]{Lemma}
\newtheorem{corollary}[theorem]{Corollary}
\newtheorem*{namedtheorem}{\theoremname}
\newcommand{\theoremname}{testing}
\newenvironment{named}[1]{\renewcommand{\theoremname}{#1}\begin{namedtheorem}}{\end{namedtheorem}}
\theoremstyle{definition}
\newtheorem{definition}[theorem]{Definition}
\newtheorem{remark}[theorem]{Remark}
\newtheorem{construction}[theorem]{Construction}
\definecolor{mygreen}{rgb}{0.2 0.7 0.2}
\newcommand{\cut}{\setminus\setminus}
\begin{document}

\begin{abstract}
    We consider volumes of two families of links that have been the focus of recent results on geometry, namely weakly generalised alternating (WGA) links and fully augmented links (FAL). Both have known lower bounds on hyperbolic volume in terms of their diagram combinatorics, but less is known about upper bounds. In fact, Kalfagianni and Purcell recently found a family of WGA knots on a compressible surface for which there can be no upper bounds on volume in terms of twist number.  They asked if upper volume bounds always exist on incompressible surfaces. We show the answer is no: we find infinite families of WGA and FALs on incompressible surfaces with no upper bound on volume in terms of twist number.
\end{abstract}

\maketitle

\section{Introduction}

    The volume of a hyperbolic 3-manifold can be a powerful invariant, but it is often difficult to determine given only a combinatorial description of a 3-manifold, such as a link diagram. One method to sidestep this difficulty is to instead bound the volume of a link complement above and below in terms of features of the diagram; the feature we consider here is the \emph{twist number}, which has been used to successfully bound the volumes of alternating knots and links from above in \cite{up_volume_bound} for links with planar projections in $S^3$, in \cite{kwon_generalize} for links in the thickened torus, and in \cite{doubledfullyaug} for general virtual links. However, in \cite{surface_volume_bound}, Kalfagianni and Purcell found a family of links in $S^3$ with no upper volume bound in terms of twist number. That family of links was projected to a genus two Heegaard surface, which is compressible; the authors asked if a volume bound may always be found in terms of twist number, provided the projection surface is incompressible \cite[Question 1.5]{surface_volume_bound}. This work answers that question in the negative. We construct a family of links that project to incompressible surfaces in the ambient manifold, the volume of which cannot be bounded above in terms of twist number. 
    
    The construction employs fully augmented links, which are constructed from general links in such a way that twist number is emphasized. Fully augmented links have been studied extensively as planar diagrams in $S^3$, for example in \cite{intro_aug_links,up_volume_bound,BlairFuterTomova,volume_bound,canon_poly_decomp}. The particular generalisation of fully augmented links in this paper, projecting to closed oriented surfaces with positive genus, enables applications to 3-manifolds beyond $S^3$, and has appeared previously in \cite{doubledfullyaug,kwon_generalize,kwon_tham_generalize, adams_generalize,construct_knots}.

    The primary result of this paper is as follows:

\begin{named}{Theorem \ref{thm:infinite_volume}}
    Let $\Sigma$ be a closed surface of genus at least two. Let $M$ be either the doubled thickened surface $\Sigma\times S^1$, or the mapping torus $(\Sigma\times I)/\phi$ where $\phi$ is a map that acts nontrivially on the isotopy class of at least one essential curve in $\Sigma$. Then there exist families of hyperbolic fully augmented links $\{J_n\}_{n\in\NN}$ such that:
    \begin{itemize}
        \item each $J_j\in\{J_n\}_{n\in\NN}$ projects to an incompressible embedding of $\Sigma$ in $M$,
        \item the number of crossing circles is a fixed natural number $c$ for all $J_j\in\{J_n\}_{n\in\NN}$, and
        \item the volume of the complement $\Vol(M\setminus J_n)$ approaches infinity as $n$ approaches infinity.
    \end{itemize}
\end{named}

In particular, this shows that there does \emph{not} exist a bound on volume in terms of the number of crossing circles, linear or otherwise.

An additional result relates this to the weakly generalised alternating links in the construction of Kalfagianni-Purcell \cite{surface_volume_bound}. It is this result that specifically answers Question 1.5 of that paper in the negative --- that is, there exist weakly generalised alternating links that project to surfaces that are incompressible in the ambient manifold, but which nevertheless do not admit a linear, or indeed any, upper bound on hyperbolic volume in terms of twist number. 


\begin{named}{Corollary \ref{cor:WGA_infinite_volume}}
    Let $\Sigma$ be a closed surface of genus at least two. There exist manifolds $M$ and families of weakly generalised alternating knots and/or links $\{K_n\}_{n\in\NN}$ such that:
    \begin{itemize}
        \item each $K_j\in\{K_n\}_{n\in\NN}$ projects to an incompressible embedding of $\Sigma$ in $M$,
        \item the twist number is a fixed natural number $c$ for all $K_j\in\{K_n\}_{n\in\NN}$, and
        \item the volume of the complement $\Vol(M\setminus K_n)$ approaches infinity as $n$ approaches infinity.
    \end{itemize}
\end{named}


\section{Fully Augmented Links in General Surfaces}

This section establishes the machinery that will be used throughout the paper. The first is the class of diagrams that will be used to investigate links in more general settings, adapted from \cite{alt_links_surfaces(chunk_decomp)}.

\begin{definition}\label{def:projection}
    Let a link $L$ lie in the neighbourhood $\Sigma\times I$ of a (possibly disconnected) closed surface $\Sigma$. The \emph{generalised projection} or \emph{generalised diagram} $\pi(L)$ of $L$ is the image of $L$ under the projection $\pi:\Sigma\times I\longrightarrow \Sigma$, with crossing information.
\end{definition}

The two common conditions for working with generalised projections below will be used throughout this paper.

\begin{definition}\label{def:cellular}
    The projection of a link is \emph{cellular} in a projection surface if its complementary regions in that surface are homeomorphic to discs. 
\end{definition}

\begin{definition}\label{def:weakly_prime}
    A link projection to a surface $\Sigma$ is \emph{weakly prime} if $D$ is any disc in $\Sigma$ where $\bdy D$ meets the projection exactly twice, either $D$ or $\Sigma\setminus D$ contains only an unknotted strand.
\end{definition}

Two main classes of links will be considered in this paper.

\begin{definition}\label{def:FAL}
    A \emph{fully augmented link} is a link that admits a projection such that each component of $L$ is one of two types:
    \begin{itemize}
        \item unknotted \emph{crossing circles} that lie perpendicular to the projection surface, each bounding a disc called a \emph{crossing disc}, and
        \item \emph{projection components} that collectively intersect each crossing disc exactly twice.
    \end{itemize}
    In this definitive projection, projection components lie entirely in the projection surface save for at most one crossing next to each crossing disc (see Figure \ref{fig:half_twist}). The number of crossing circles is denoted $c$ and the number of projection components, $l$.
\end{definition}

    \begin{figure}[h]
        \begin{center}
        \includegraphics{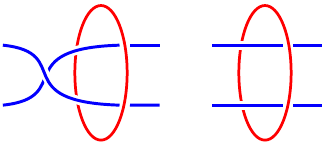}
        \end{center}
        \caption{Crossing circles (red) shown with and without a half-twist in the projection components (blue).}
        \label{fig:half_twist}
    \end{figure}

In general, diagrams of fully augmented links are assumed to be of this definitive type unless otherwise specified. All projections of fully augmented links considered in this paper are of this definitive type. Also note that if a fully augmented link projection is cellular, every projection component meets at least one crossing circle --- if a component meets no crossing circles it has no crossings, and so has a neighbourhood that is an annulus.

The definition of the second class of link below is reproduced from Howie and Purcell \cite{alt_links_surfaces(chunk_decomp)}.

\begin{definition}\label{def:WGA}
    A \emph{weakly generalised alternating} link projection on a (possibly disconnected) surface $\Sigma$:
    \begin{itemize}
        \item is weakly prime;
        \item has link components that project to each component of $\Sigma$;
        \item has at least one crossing in the projection of each link component;
        \item divides $\Sigma$ into checkerboard-colourable faces; 
        \item has respresentativity $\geq 4$, i.e.~each compression disc of $\Sigma$ has at least four transverse intersections with the link projection;
        \item alternates about each complementary region of $\Sigma$, i.e.~every strand about the edge of each complementary region of $\Sigma$ passes from an over- to an understrand.
    \end{itemize}
\end{definition}

An important feature of general links that connects them to fully augmented links:

\begin{definition}\label{def:twist_reduced}
    A \emph{twist region} of a diagram consists of either a string of bigons arranged end-to-end or a single crossing adjacent to no bigons. The minimum number of twist regions in a diagram of a given link is the \emph{twist number} of the link.
\end{definition}

We require diagrams to be alternating in twist regions; if not, the number of crossings in the diagram may be reduced in an obvious way. The connection is as follows:

\begin{definition}\label{def:augmenting}
    A link diagram is \emph{augmented} by adding an unknotted component transversely about a twist region such that it bounds a disc through the twist region. For one-crossing twist regions, there are two choices of augmentation.
\end{definition}

The complement of the link obtained by augmenting each twist region of a twist reduced link is homeomorphic to that obtained by removing all \emph{full twists} from each twist region, leaving either only a crossing circle or a crossing circle with a \emph{half-twist} (single crossing). This new link with full twists removed is the fully augmented link of Definition \ref{def:FAL}.

\begin{lemma}\label{lem:weakly-prime}
    Let $L$ be a fully augmented link lying in the neighbourhood of an incompressible surface $\Sigma$ in a 3-manifold $M$. Suppose $M\setminus L$ is hyperbolic. Then there exists a projection $\pi(L)$ of $L$ onto $\Sigma$ that is weakly prime.
\end{lemma}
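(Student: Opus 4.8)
The plan is to show that \emph{every} definitive projection of $L$ onto $\Sigma$ is already weakly prime, which clearly suffices. So I would fix such a projection $\pi(L)$ and suppose for contradiction that it is not weakly prime: there is a disc $D\subset\Sigma$ with $\partial D$ meeting $\pi(L)$ transversally in two points, yet both $D$ and $\Sigma\setminus D$ carry more than an unknotted strand. After a small isotopy of $\gamma:=\partial D$ (and possibly replacing $D$) one may assume the two intersection points lie on flat portions of projection components, disjoint from all crossings and crossing discs; the cases in which $\gamma$ meets a crossing circle are handled in the same spirit but with easier bookkeeping, using that $\gamma$ is separating in $\Sigma$ and hence meets each component of $\pi(L)$ an even number of times. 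Regarding $L$ as lying in $\Sigma\times I$, I would then form the $2$-sphere $S:=\partial(D\times I)=(D\times\{0\})\cup(\gamma\times I)\cup(D\times\{1\})$. Since $\gamma$ avoids the crossings and crossing discs, the vertical annulus $\gamma\times I$ meets $L$ in exactly the two points lying over $\gamma\cap\pi(L)$ while the capping discs miss $L$, so $S$ is an embedded $2$-sphere meeting $L$ transversally in exactly two points. Setting $X:=M\setminus\mathrm{int}\,N(L)$, which is hyperbolic by hypothesis, let $A:=S\cap X$ be the resulting properly embedded annulus in $X$, whose two boundary curves are meridians of $L$.

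The heart of the argument is to pin down $A$ up to isotopy using the geometry of $X$. First I would check that $A$ is incompressible in $X$: a compressing disc would have boundary the core of $A$, which is isotopic within $A$ to a meridian of $L$, so that meridian would bound a disc in $X$ --- impossible, since the cusp tori of a finite-volume hyperbolic $3$-manifold are incompressible. Then I would argue that $A$ is boundary-parallel in $X$. If the two punctures $S\cap L$ lay on distinct components of $L$, then $\partial A$ would lie on two distinct cusp tori; no essential spanning arc of $A$ could be completed on $\partial X$, so $A$ would be $\partial$-incompressible and, not being boundary-parallel, essential --- contradicting that a hyperbolic $3$-manifold contains no essential annuli. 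Hence both punctures lie on a single component $L_i$, and now the incompressible annulus $A$ in the irreducible, $\partial$-irreducible, anannular manifold $X$ must be boundary-parallel: if $A$ is $\partial$-compressible, $\partial$-compress it to a disc, which is boundary-trivial by $\partial$-irreducibility and exhibits $A$ as boundary-parallel; otherwise $A$ is $\partial$-incompressible and boundary-parallel by anannularity.

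Finally I would translate this back into the diagram. Boundary-parallelism provides a product region $P\cong A\times I$ in $X$ with $A$ at one end and an annulus $A'\subset\partial N(L_i)$ at the other; $P$ lies on one side of $S$ in $M$, and since $S=\partial(D\times I)$ that side is either $D\times I$ or the closure of its complement (and $L$ misses everything outside the slab $\Sigma\times I$). Completing $P$ across $A'$ into $N(L_i)$ captures exactly one sub-arc $\beta$ of $L_i$ and nothing else, so that side of $S$ meets $L$ only in $\beta$; in particular it contains no crossing circle and no closed component. Therefore one of $D$, $\Sigma\setminus D$ --- say $D$ --- satisfies $L\cap(D\times I)=\beta$, so there is no crossing disc over $D$, and, the projection being definitive (every crossing is a half-twist beside a crossing disc), the arc $\beta$ lies flat in $D$ with no crossings. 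Thus $\pi(L)\cap D$ is a single embedded, crossing-free arc, i.e.\ an unknotted strand --- contradicting that both $D$ and $\Sigma\setminus D$ carry more than an unknotted strand. Hence $\pi(L)$ is weakly prime.

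The step I expect to be the main obstacle is this last translation: verifying that the boundary-parallel annulus really does cut off a side of $S$ meeting $L$ in a single trivial arc, rather than some more complicated identification, and correctly matching the two sides of $S$ with the two sides of $\gamma$ in $\Sigma$ --- in particular, pinning down exactly which link data end up inside $P\cup N(\beta)$. A secondary technical point is the opening general-position move, ensuring a violating disc can be chosen with $\gamma$ disjoint from all crossing discs; the parity constraint from $\gamma$ being separating in $\Sigma$ is what makes the stray cases tractable. The two ingredients from hyperbolic geometry --- incompressibility of the cusps and the absence of essential annuli --- are standard and would simply be cited.
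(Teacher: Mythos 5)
Your argument is essentially the paper's: both take the twice-punctured sphere $\partial(D\times I)$ over a violating disc, pass to the meridional annulus $A$ in the hyperbolic exterior, and use incompressibility of $A$ together with the absence of essential annuli to force $A$ to be inessential. The only difference is the endgame --- you push through to boundary-parallelism and conclude the given definitive projection was already weakly prime, whereas the paper stops at boundary-compressibility and uses it to isotope $L\cap D$ to a trivial arc, producing a (possibly new) weakly prime projection; your version is slightly stronger and also subsumes the paper's separate essential-sphere argument for the case that $D$ contains an entire component of $\pi(L)$.
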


    \begin{proof}
    This proof will show that given any generalised projection of a link $L$ to a surface $\Sigma$ where $L$ is hyperbolic in the ambient manifold $M$, it is possible to construct a weakly prime projection. A key tool is Thurston's hyperbolisation Theorem \cite{Thurston_Hyperbolization} which, among other consequences, states that any annulus or sphere in a hyperbolic manifold is inessential.

    Suppose there exists a disc $D\subset \Sigma$ such that $\pi(L)$ intersects $\bdy D$ exactly twice and $D$ contains something other than a trivial arc, i.e.~$\pi(L)$ is not weakly prime. First note that if $D$ contains an entire connected component of $\pi(L)$, then one can find an essential sphere in $M\setminus L$ by taking a neighbourhood $N(D)$ of $D$ and contracting it until $\bdy N(D)$ does not intersect $L$. This is a contradiction, as $M\setminus L$ is hyperbolic and so contains no essential spheres. 
    
    Take a neighbourhood $N(D)$ of $D$ in $M$ such that $\bdy N(D)$ intersects $L$ exactly twice, i.e.~$\bdy N(D)$ is an annulus. Call this annulus $A$. Given $M\setminus L$ is hyperbolic, $A$ cannot be essential. Suppose $A$ is compressible; then the intersection of the compression disc of $A$ with $D$ divides $D$ into two discs, each of which has boundary that meets $\pi(L)$ exactly once. As $L$ is composed of closed curves, this is impossible. Then $A$ is boundary compressible --- in particular, there exists an ambient isotopy in $M$ of $L\cap D$ onto an arc of $\bdy D$. As $D$ is a disc on $\Sigma$, $\bdy D$ is a simple closed curve. Thus, this isotopy removes all crossings from $L\cap D$. Repeat this for every $D\subset \Sigma$ where $\bdy D$ meets $\pi(L)$ exactly twice and $D\cap\pi(L)$ is not a trivial arc; after finitely many such isotopies, $\pi(L)$ is weakly prime.
    \end{proof}

\begin{theorem}\label{thm:augment_to_WGA}
    Let $\pi(L)$ be a cellular, weakly prime projection of a fully augmented link $L$ to an incompressible surface $\Sigma$, where $\Sigma\setminus\pi(L)$ is checkerboard-colourable. Then an integer $t_j\neq0$ may be chosen for each crossing circle of $L$ such that performing $1/t_k$ Dehn filling on the $k$-th crossing circle of $L$ for all $k$ produces a weakly generalised alternating link.
\end{theorem}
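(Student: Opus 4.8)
The plan is to start from the definitive diagram $\pi(L)$ of the fully augmented link and reintroduce twists at each crossing circle, showing that for a suitable choice of twisting parameters the resulting diagram satisfies every bullet of Definition~\ref{def:WGA}. Recall that performing $1/t_k$ Dehn filling on the $k$-th crossing circle undoes the augmentation: it replaces the crossing circle (together with its half-twist, if present) by a twist region containing $2t_k$ or $2t_k+1$ crossings among the two strands of the projection component passing through the corresponding crossing disc. So the filled manifold $M\setminus L'$ is the complement of a link $L'$ with a generalised diagram $\pi(L')$ obtained from $\pi(L)$ by deleting the crossing circles and inserting these twist regions. I would fix the sign of each $t_k$ so that the twisting is consistent with the checkerboard colouring — this is exactly the standard device by which augmenting an alternating (or here, checkerboard-colourable) diagram and then reinserting twists yields an alternating diagram, and it forces the ``alternates about each complementary region'' condition. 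The number of crossings can be made as large as we like by taking $|t_k|$ large, but the statement only needs $t_j \neq 0$, i.e.\ at least one crossing per former crossing disc.

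Next I would verify the remaining conditions of Definition~\ref{def:WGA} one at a time. Checkerboard-colourability is inherited: the faces of $\pi(L')$ refine the faces of $\pi(L)$ (a string of bigons subdivides a region into a chain of bigons plus two larger faces), and subdividing a checkerboard colouring along arcs of the diagram preserves checkerboard-colourability — here we use the hypothesis that $\Sigma\setminus\pi(L)$ is already checkerboard-colourable. The condition that link components project to every component of $\Sigma$ and that each component has at least one crossing follows from cellularity of $\pi(L)$ together with the remark in the text that in a cellular fully augmented diagram every projection component meets at least one crossing circle: after filling, that component acquires crossings in the corresponding twist region. Weak primeness of $\pi(L')$ follows from weak primeness of $\pi(L)$: inserting twist regions into the faces cannot create a disc $D$ with $|\bdy D \cap \pi(L')| = 2$ bounding something nontrivial that wasn't already detected in $\pi(L)$, because a curve crossing a twist region only twice can be pushed off it. (Alternatively, one may invoke Lemma~\ref{lem:weakly-prime} applied to the filled link, provided one has already established hyperbolicity — but I would avoid that circularity and argue combinatorially.)

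The main obstacle, and the place where the choice of $t_j$ genuinely matters beyond ``nonzero,'' is the representativity condition: every compression disc of $\Sigma$ must meet $\pi(L')$ in at least four points. Since $\Sigma$ is incompressible in $M$, every compression disc of $\Sigma$ (as an abstract surface) must be ``used up'' by the link — its boundary, a curve $\gamma$ on $\Sigma$ bounding a disc in $M$, must intersect $L'$. We need to rule out $\gamma$ meeting $\pi(L')$ in $0$ or $2$ points. The case of $0$ intersections would give a compressing disc for $\Sigma$ in $M\setminus L'$, contradicting incompressibility (here one checks that the Dehn fillings do not re-compress $\Sigma$: a core argument is that $\Sigma$ remains incompressible because the crossing circles, being perpendicular to $\Sigma$ and bounding crossing discs meeting $\pi(L)$ twice, are already ``detected'' by $\Sigma$, so surgering them cannot produce a new compression). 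The case of exactly $2$ intersections is excluded by weak primeness of $\pi(L')$ combined with the fact that $L'$ has no trivial split unknotted component living inside such a disc — again using that $M\setminus L$ is not disconnected in a bad way, which ultimately traces back to $\Sigma$ being incompressible and $\pi(L)$ cellular. So the crux is a careful incompressibility-preservation argument under $1/t_k$ surgery on the crossing circles; I expect this to require the hypothesis that the original surface is incompressible in $M$ in an essential way (e.g.\ via an innermost-disc argument on the intersection of a hypothetical new compressing disc with the union of crossing discs), and it is where most of the real work of the theorem lies.
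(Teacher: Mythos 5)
Your verification of most of the conditions of Definition~\ref{def:WGA} (checkerboard-colourability being inherited, every component acquiring a crossing because cellularity forces each projection component to meet a crossing circle, weak primeness surviving the insertion of twist regions, and choosing the signs of the $t_k$ compatibly with the colouring to get the alternating property) tracks the paper's proof closely, and your combinatorial argument for weak primeness is if anything more explicit than the paper's one-line appeal to Lemma~\ref{lem:weakly-prime}.

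The problem is your final paragraph. You identify the representativity condition as ``the main obstacle'' and ``where most of the real work of the theorem lies,'' and then you do not actually carry out that work --- you end with ``I expect this to require a careful incompressibility-preservation argument.'' But under the hypotheses of the theorem there is nothing to prove here: a compression disc of $\Sigma$ is an embedded disc in $M$ meeting $\Sigma$ exactly in its boundary, with that boundary essential in $\Sigma$, and the hypothesis that $\Sigma$ is \emph{incompressible in $M$} says precisely that no such disc exists. The representativity condition is therefore vacuously satisfied (the paper phrases this as ``the representativity of $\pi(L)$ is infinite''), and this has nothing to do with how the link sits on $\Sigma$ or with the choice of the $t_k$. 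Your reading --- that each compression disc must be ``used up'' by the link, so that one must rule out $0$ or $2$ intersections --- conflates incompressibility of $\Sigma$ in $M$ with incompressibility of the punctured surface in the link complement; the former is what is assumed, and it is exactly what makes this theorem easy where the compressible-surface setting of Kalfagianni--Purcell is hard. There is also no issue of the surface becoming compressible after filling: $1/t_k$ filling on a crossing circle, which bounds a disc in $M$, returns the same ambient manifold $M$ with the strands twisted, so $\Sigma\subset M$ is unchanged. So the one place where your proposal is genuinely incomplete is a place where no argument is needed; once that paragraph is replaced by the observation that incompressibility kills the condition outright, the rest of your proof is essentially the paper's.
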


    \begin{proof}
    Recall the conditions of Definition \ref{def:WGA}. We will show each is satisfied by the link obtained by Dehn filling the crossing circles of a fully augmented link with the stated properties.

    As $\Sigma$ has no compression discs, the representativity of $\pi(L)$ is infinite. A hyperbolic fully augmented link admits a weakly prime projection by Lemma \ref{lem:weakly-prime}; it follows that the Dehn filling is weakly prime.

    A cellular diagram $\pi(L)$ necessarily projects to all components of $\Sigma$, as $\Sigma$ is composed of closed surfaces. Further, performing $1/t$ Dehn filling on a crossing circle with $t\neq0$ creates a twist region with $\geq1$ crossing, ensuring at least one crossing is projected to each component of $\Sigma$. 

    Figure \ref{fig:check-colouring} shows that Dehn filling a crossing circle of a checkerboard-colourable fully augmented link diagram produces a twist region that is checkerboard colourable in the same manner. By Definition 2.7 of \cite{alt_links_surfaces(chunk_decomp)}, choosing a checkerboard colourable $\pi(L)$ ensures that there exists a choice of sign for $t_j$ in $1/t_j$ Dehn filling that produces a link projection that alternates around each complementary region.

    \begin{figure}
        \centering
        \includegraphics{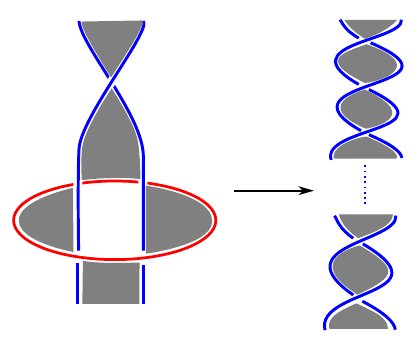}
        \caption{Converting a crossing circle to a twist region preserves checkerboard-colourability. It may be seen that the colouring is consistent with or without the half-twist.}
        \label{fig:check-colouring}
    \end{figure}

    Thus, by performing $1/t_k$ Dehn filling on the crossing circles of a fully augmented link with checkerboard colourable diagram, one can obtain a weakly generalised alternating knot or link.
    \end{proof}

\section{Constructions}\label{sec:constructions}

In this section, we adapt Kalfagianni and Purcell's construction in \cite{surface_volume_bound} to two settings with incompressible projection surfaces. 

In their construction, Kalfagianni and Purcell \cite{surface_volume_bound} drill $m$ pairs of unknotted, unlinked curves $C_1,C_2,...,C_{2m-1},C_{2m}$, about a WGA link to produce a manifold with hyperbolic volume bounded below by a specified (arbitrarily large) volume. We observe that these drilled curves are ambient isotopic to curves that bound compression discs of the projection surface in $S^3$ (Figure \ref{fig:surface_loops}), and thus the Dehn fillings of the original construction may then be realized as Dehn twists on the projection surface. 

    \begin{figure}[h]
            \begin{center}
            \includegraphics[]{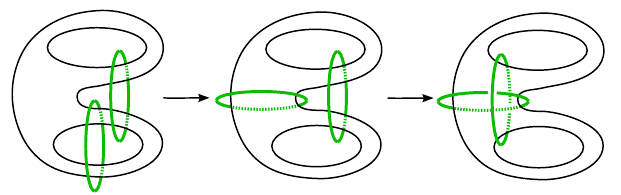}
            \end{center}
            \caption{Ambient isotopy of a pair of curves about handles of an example surface of genus $2$ to lie in a neighbourhood of the surface.}
            \label{fig:surface_loops}
        \end{figure}

\begin{construction}[Virtual link with layered curves]\label{const:layered_curves}
    Let $L$ be a fully augmented link with projection $\pi(L)$ that is cellular in a surface $\Sigma$ with genus $g\geq 2$ lying in the thickened surface $\Sigma \times I$. We will refer to $L$ as the \emph{base link}.

    Fix two essential simple closed curves $\gamma_{odd},\gamma_{even}$ on $\Sigma$, such that $\gamma_{odd}$ intersects $\gamma_{even}$ transversely, denoted $\iota(\gamma_{odd},\gamma_{even})\geq 1$. Isotope $\gamma_{odd}$ and $\gamma_{even}$ such that they do not intersect any crossing circle of $L$. Fix an integer $m$; add $m$ pairs of unknotted, unlinked curves $C_1, C_{-1}, ..., C_{m}, C_{-m}$ in a neighbourhood of $\Sigma$ as in Figure \ref{fig:layered_curves}, where:

    \begin{itemize}
        \item Each $C_i$ is isotopic to $\gamma_{odd}$ for $i$ odd and $\gamma_{even}$ for $i$ even;
        \item For $j,i$ of the same sign where $|j|>|i|$, $C_i$ lies between $C_j$ and $\Sigma$.
    \end{itemize}

    Collectively denote these \emph{layered curves} by $\mathcal{C}_m=\bigcup_{i=1}^mC_{\pm i}$, and the family of links $L\cup \mathcal{C}_m$ by $\{L_m\}_{m\in\NN}$, where $L_0$ is the base link. Each pair $C_{\pm i}$ forms the boundaries of an annulus $A_i$. Each $A_i$ intersects $\Sigma$ in a curve isotopic to either $\gamma_{odd}$ or $\gamma_{even}$ based on the parity of $i$.
\end{construction}

\begin{figure}
    \centering
    \import{Figures}{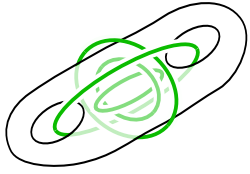}
    \caption{Adding curves about a surface such that they bound annuli through the surface.}
    \label{fig:layered_curves}
\end{figure}

From this, we can now perform two separate constructions to ``close" the manifold $(\Sigma\times I)\setminus L_m$; see Figure \ref{fig:constructions}. The base link is assumed to be hyperbolic in the ambient manifold in both cases.

\begin{construction}[Doubled thickened surface]\label{const:doubled_thickened_surface}
    Begin with $L_m\subset\Sigma\times I$, as in Construction \ref{const:layered_curves}. Take a second base link $L'$ with projection $\pi(L')$ that is cellular in a copy $\Sigma'$ of $\Sigma$ lying in thickened surface $\Sigma'\times I$. We will refer to $L\cup L'$ as the base links for this manifold.

    Glue via the identity $\Sigma\times\{0\}$ to $\Sigma'\times\{0\}$ and $\Sigma\times\{1\}$ to $\Sigma'\times\{1\}$. The result is an ambient manifold isomorphic to $\Sigma\times S^1$, the \emph{doubled thickened surface}, where the base links $L\cup L'$ lie in neighbourhoods of $\Sigma$ and $\Sigma'$, and the layered curves $\mathcal{C}_m$ in lie in copies of the surface $\Sigma$ between the projection surfaces $\Sigma$ and $\Sigma'$.
\end{construction}

\begin{construction}[Mapping torus]\label{const:mapping_torus}
    Begin with the virtual link with layered curves of Construction \ref{const:layered_curves}. Identify the boundaries of $\Sigma \times I$ by a nontrivial element $\phi$ of the mapping class group of $\Sigma$. The result is the ambient $(\Sigma\times I)/\phi$, the \emph{mapping torus of $\phi$}, with the base link $L$ in a neighbourhood of $\Sigma$ and the layered curves $\mathcal{C}_m$ in copies of $\Sigma$.

    In this construction, we require that the isotopy classes of $\gamma_{odd}$ and $\gamma_{even}$ are nontrivially acted on by $\phi$, denoted $\phi(\gamma)\not\sim\pm\gamma$. This may restrict the possible choices of $\gamma_{odd}$ and $\gamma_{even}$ for some $\phi$.
\end{construction}

\begin{figure}[h]
    \centering
    \import{Figures}{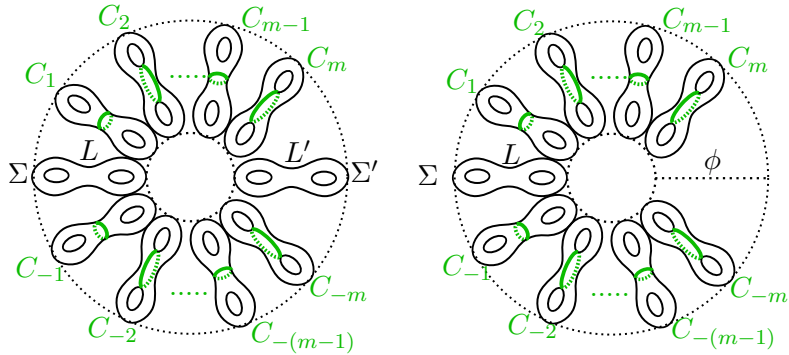}
    \caption{The two constructions of $M$, represented schematically; $L$ and $L'$ in $\Sigma\times S^1$ at left, and $L$ in $(\Sigma\times I)/\phi$ at right.}
    \label{fig:constructions}
\end{figure}

\begin{lemma}\label{lem:one_curve}
    Suppose an element $\phi$ of the mapping class group of a closed surface $\Sigma$ acts nontrivially on the isotopy class of a nontrivial simple closed curve $\gamma$, i.e.~$\phi(\gamma)\not\sim\pm\gamma$. Then there exists a second such curve $\gamma'$ such that $\iota(\gamma,\gamma')\geq 1$ and $\phi(\gamma')\not\sim\pm\gamma'$. 
\end{lemma}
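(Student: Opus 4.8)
The plan is to argue by contradiction: I will suppose that \emph{every} simple closed curve $\alpha$ on $\Sigma$ with $\iota(\alpha,\gamma)\geq 1$ satisfies $\phi(\alpha)\sim\pm\alpha$, and derive a contradiction with $\phi(\gamma)\not\sim\pm\gamma$. The first thing to check is that curves meeting $\gamma$ essentially exist at all: since $\gamma$ is nontrivial on a closed orientable surface of positive genus, it has a dual simple closed curve $\gamma'$ with $\iota(\gamma,\gamma')\geq 1$ (one may take $\iota(\gamma,\gamma')=1$ if $\gamma$ is non-separating and $\iota(\gamma,\gamma')=2$ if it is separating). Fix such a $\gamma'$. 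By the assumption being contradicted, $\phi(\gamma')\sim\pm\gamma'$.

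The key structural observation is that $\phi$ is then forced to commute with the Dehn twist $T_{\gamma'}$ about $\gamma'$. A Dehn twist depends only on the unoriented isotopy class of its curve, so $T_{\phi(\gamma')}=T_{\gamma'}$; and by the naturality relation $\psi T_c \psi^{-1}=T_{\psi(c)}^{\pm 1}$ (the exponent being $+1$ exactly when $\psi$ preserves orientation) we get $\phi T_{\gamma'}\phi^{-1}=T_{\gamma'}^{\pm 1}$. For clarity I would first handle the orientation-preserving case, where this says precisely that $\phi$ commutes with $T_{\gamma'}$, hence with $T_{\gamma'}^{n}$ for all $n$.

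Next I would manufacture a supply of curves meeting $\gamma$ by twisting: for $n\geq 1$ set $\alpha_n = T_{\gamma'}^{n}(\gamma)$. The standard estimate for intersection numbers under Dehn twist powers, $\bigl|\iota(T_{\gamma'}^{n}(\gamma),\gamma) - n\,\iota(\gamma,\gamma')^{2}\bigr|\leq\iota(\gamma,\gamma)=0$, gives $\iota(\alpha_n,\gamma)=n\,\iota(\gamma,\gamma')^{2}\geq 1$. Hence, by the hypothesis being contradicted, $\phi(\alpha_n)\sim\pm\alpha_n$. On the other hand, since $\phi$ commutes with $T_{\gamma'}^{n}$ we have $\phi(\alpha_n)=T_{\gamma'}^{n}\bigl(\phi(\gamma)\bigr)$, so $T_{\gamma'}^{n}(\phi(\gamma))\sim\pm T_{\gamma'}^{n}(\gamma)$; applying the homeomorphism $T_{\gamma'}^{-n}$ yields $\phi(\gamma)\sim\pm\gamma$, the desired contradiction. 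Thus some $\alpha$ with $\iota(\alpha,\gamma)\geq 1$ has $\phi(\alpha)\not\sim\pm\alpha$, and this $\alpha$ is the curve claimed in the statement.

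The main obstacle I anticipate, and it is a minor one, is the orientation-reversing case, where one only gets $\phi T_{\gamma'}\phi^{-1}=T_{\gamma'}^{-1}$. There I would run the same computation with both $\alpha_n$ and $\alpha_{-n}=T_{\gamma'}^{-n}(\gamma)$, each of which still satisfies $\iota(\alpha_{\pm n},\gamma)=n\,\iota(\gamma,\gamma')^{2}\geq 1$: the two resulting relations force $\phi(\gamma)\sim\pm T_{\gamma'}^{2n}(\gamma)$ and $\phi(\gamma)\sim\pm T_{\gamma'}^{-2n}(\gamma)$, hence $T_{\gamma'}^{4n}(\gamma)\sim\gamma$, which is impossible because $\iota(T_{\gamma'}^{4n}(\gamma),\gamma)=4n\,\iota(\gamma,\gamma')^{2}>0$ while isotopic simple closed curves have zero geometric intersection number. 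If the ambient mapping class group is taken to consist only of orientation-preserving classes, this case does not even arise, and the orientation-preserving argument above suffices.
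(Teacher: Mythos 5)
Your proof is correct and follows essentially the same route as the paper's: both reduce to the observation that if a curve $\alpha$ dual to $\gamma$ satisfies $\phi(\alpha)\sim\pm\alpha$, then $\phi$ conjugates $T_{\alpha}$ to $T_{\alpha}^{\pm1}$, so the twisted curve $T_{\alpha}^{n}(\gamma)$ (which still meets $\gamma$ essentially, by the standard intersection formula) cannot also have its isotopy class preserved without forcing $\phi(\gamma)\sim\pm\gamma$. Your use of $\iota\bigl(T_{\gamma'}^{n}(\gamma),\gamma\bigr)=|n|\,\iota(\gamma,\gamma')^{2}$ to close the orientation-reversing case is, if anything, slightly cleaner than the paper's appeal to the infinite order of Dehn twists.
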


    \begin{proof}
        Given $\gamma$ as above, take a second curve $\alpha$ such that $\iota(\gamma,\alpha)\geq 1$. If $\phi(\alpha)\not\sim\pm\alpha$, take $\gamma'=\alpha$; done. Suppose instead that $\phi(\alpha)\sim\pm\alpha$. We will construct a curve $\gamma'$ that satisfies the proposition. 

        Take the Dehn twist of $\gamma$ about $\alpha$, $T_\alpha(\gamma)$. This has intersection number $\iota\big(\gamma,T_\alpha(\gamma)\big)=\iota(\gamma,\alpha)^2\geq1$. As $\phi$ is a homeomorphism, it preserves intersections; thus, $\iota\big(\phi(\gamma),\phi\big(T_\alpha(\gamma)\big)\big)=\iota\big(\gamma,T_\alpha(\gamma)\big)\geq 1$. We now only require that $\phi\big(T_\alpha(\gamma)\big)\not\sim T_\alpha(\gamma)$. We have that $\phi\big(T_\alpha(\gamma)\big)=T_{\phi(\alpha)}\big(\phi(\gamma)\big)=T_{\pm\alpha}\big(\phi(\gamma)\big)$, as $\phi(\alpha)\sim\pm\alpha$. 
        
        If $\phi(\alpha)\sim\alpha$, then we have that $T_\alpha(\gamma)=T_\alpha\big(\phi(\gamma)\big)$; this implies that either $\gamma\sim\alpha$, $\iota(\gamma,\alpha)=0$, or $\gamma\sim\phi(\gamma)$, all of which are false by assumption. 
        
        If $\phi(\alpha)\sim-\alpha$, then $T_\alpha(\gamma)\sim\phi\big(T_\alpha(\gamma)\big)=T_\alpha^{-1}\big(\phi(\gamma)\big)$, which implies that $T_\alpha^2\gamma\sim\phi(\gamma)$; that is, the map $\phi$ acts on $\gamma$ as two Dehn twists about $\alpha$. This, in turn, implies that $T_\alpha$ is order two, as $T_\alpha(\gamma)\sim\phi\big(T_\alpha(\gamma)\big)=T_\alpha^3(\gamma)$; this is a contradiction, as Dehn twists have infinite order. Note also that this implies $\gamma\sim T_\alpha^2(\gamma)=\phi(\gamma)$, which is again false by assumption.
    \end{proof}

\begin{remark}
    To perform Construction \ref{const:mapping_torus} in the mapping torus of a map $\phi$, it suffices to find one curve $\gamma$ such that $\phi(\gamma)\not\sim\gamma$, and construct a second as in Lemma \ref{lem:one_curve}. 
\end{remark}

Thus the only mapping tori $(\Sigma\times I)/\phi$ for which Construction \ref{const:mapping_torus} is not possible are those for which $\phi(\gamma)\sim\pm\gamma$ for all nontrivial simple closed curves $\gamma\subset\Sigma$. See Theorem \ref{thm:bounded_volume} and Remark \ref{rem:hyperelliptic_involution}.

\subsection{Hyperbolicity}\label{sec:hyperbolicity}

We require some preliminary observations to prove Constructions \ref{const:mapping_torus} and \ref{const:doubled_thickened_surface} are hyperbolic.

\begin{lemma}\label{lem:longitudinal_annulus}
    Let $\gamma$ and $\gamma'$ be two essential closed curves lying in distinct copies $\Sigma\times\{s\}=\Sigma_s$ and $\Sigma\times\{t\}=\Sigma_t$ of a surface $\Sigma$ in a thickened surface $\Sigma\times I$. Suppose there exists an essential annulus $A$ in $\Sigma\times I\setminus \big(N(\gamma)\cup N(\gamma')\big)$ with boundary on $\bdy N(\gamma)\cup \bdy N(\gamma')$; then $\bdy A$ is composed of curves parallel to $\bdy N(\gamma)\cap\Sigma_s$ and $\bdy N(\gamma')\cap\Sigma_t$.
\end{lemma}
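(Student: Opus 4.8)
The plan is to compare the essential annulus $A$ with the punctured level surfaces $F:=\Sigma_s\setminus N(\gamma)$ and $F':=\Sigma_t\setminus N(\gamma')$, show that $A$ can be isotoped to miss $F\sqcup F'$, and then read off its boundary slopes. Write $M=(\Sigma\times I)\setminus\big(N(\gamma)\cup N(\gamma')\big)$ and, after isotopy, take $N(\gamma),N(\gamma')$ to be thin tubes lying in disjoint subproducts $\Sigma\times(s-\epsilon,s+\epsilon)$ and $\Sigma\times(t-\epsilon,t+\epsilon)$; let $T_\gamma:=\partial N(\gamma)$, $T_{\gamma'}:=\partial N(\gamma')$, let $\lambda_\gamma$ denote the slope of $\partial N(\gamma)\cap\Sigma_s$ on $T_\gamma$ and $\lambda_{\gamma'}$ the analogous slope on $T_{\gamma'}$, and write $T_\gamma^{+},T_\gamma^{-}$ for the two annuli into which $\Sigma_s$ cuts $T_\gamma$.

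First I would record the facts to be used. Since $\Sigma\times I$ is irreducible, so is $M$; since $M$ contains the incompressible closed surface $\Sigma_u$ for $u$ between the two tubes, $M$ is not a solid torus, so the torus boundary components $T_\gamma,T_{\gamma'}$ are incompressible in $M$. The surfaces $F,F'$ are properly embedded and incompressible in $M$, because $\pi_1(F)\hookrightarrow\pi_1(\Sigma)\hookrightarrow\pi_1(\Sigma\times I)$ is injective and a null‑homotopy in $M$ persists in $\Sigma\times I$. Crucially, $F$ and $F'$ are $\partial$-incompressible in $M$: cutting $M$ along $F$ produces a manifold whose ``bent'' boundary surface $F\cup T_\gamma^{-}$ is again homeomorphic to $\Sigma$, so cutting $M$ along $F\sqcup F'$ decomposes it into three pieces each homeomorphic to $\Sigma\times I$; a $\partial$-compressing disc for $F$, put in minimal position, then lies in one of these products with its boundary on a single $\Sigma$-face, and inspecting that disc forces its arc on $F$ to be inessential in $F$ — here one uses $\mathrm{genus}(\Sigma)\ge 2$, which guarantees every component of $F$ has positive genus. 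Finally, $A$ being essential is incompressible and, as an incompressible non–$\partial$-parallel annulus, $\partial$-incompressible, and we may assume $\partial A$ is essential on $\partial M$ (otherwise $A$ visibly compresses or is $\partial$-parallel).

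The main work is to put $A$ in minimal position with respect to $F\sqcup F'$ and eliminate all components of intersection that are arcs. A standard innermost-circle argument — using incompressibility of $F$, $F'$, $A$ and $\Sigma_s,\Sigma_t$, together with irreducibility of $M$ — shows that every surviving circle of $A\cap(F\sqcup F')$ is essential in the level surface and core-parallel in $A$. Each arc of $A\cap F$ has its endpoints on $\partial A\cap\partial F\subset T_\gamma$, so it is either a ``same-sided'' arc of $A$ (both endpoints on one boundary circle of $A$, hence cutting off a disc $D\subset A$) or, only possible when $\partial A\subset T_\gamma$, a spanning arc of $A$. A same-sided arc is impossible: taking $D$ innermost and meeting $F$ only in that arc, if the arc is essential in $F$ then $D$ is a $\partial$-compressing disc for $F$, contradicting $\partial$-incompressibility of $F$; if it is $\partial$-parallel in $F$, capping $D$ with the disc it cuts off in $F$ gives a disc with boundary on $T_\gamma$, which either compresses $T_\gamma$ or allows an isotopy reducing $|A\cap F|$. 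When $\partial A$ meets both $T_\gamma$ and $T_{\gamma'}$, spanning arcs are also impossible for dimensional reasons (a spanning arc lying in $F\subset\Sigma_s$ cannot reach $T_{\gamma'}$), so in that case $A\cap(F\sqcup F')$ has no arcs; one then concludes that $\partial A$ is disjoint from $\partial F\sqcup\partial F'$, hence each component of $\partial A$ on $T_\gamma$ is an essential curve disjoint from $\partial N(\gamma)\cap\Sigma_s$ and therefore parallel to it (and likewise on $T_{\gamma'}$), which is the claim. In particular no component of $\partial A$ is meridional — as one can also verify directly by capping off along a meridian disc to get a $2$-sphere in $\Sigma\times I$ and analyzing the ball it bounds.

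The step I expect to be the main obstacle is the remaining case $\partial A\subset T_\gamma$ with $A\cap F$ a nonempty family of spanning arcs of $A$. The plan there is to cut $A$ along these arcs into rectangles; each rectangle has its $\partial A$-arc confined to one of the half-tori $T_\gamma^{\pm}$ and so lies in a single product piece of $M\setminus(F\sqcup F')$ with boundary on one $\Sigma$-face, hence is boundary-parallel in that product; reassembling the rectangles compatibly shows $A$ itself is $\partial$-parallel, contradicting essentiality (equivalently: $M$ carries no essential annulus with both boundary curves on a single $T_\gamma$), so this case does not occur. Making ``compatibly'' precise is the delicate point — it requires an innermost-disc/parallel-arc bookkeeping, organized as an induction on the geometric intersection number $\iota\big(\partial A\cap T_\gamma,\lambda_\gamma\big)$ whose base case $\iota=0$ is exactly the longitudinal one — and establishing the $\partial$-incompressibility of $F,F'$ (the product decomposition of $M$ along $F\sqcup F'$) is the other ingredient that needs care, though it reduces to the classical Waldhausen description of incompressible, $\partial$-incompressible surfaces in a product $\Sigma\times I$.
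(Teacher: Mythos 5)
Your overall strategy is the same as the paper's: isotope $A$ off the (punctured) level surfaces and then observe that a boundary curve on $\bdy N(\gamma)$ that is essential and disjoint from $\bdy N(\gamma)\cap\Sigma_s$ must be parallel to it. The paper removes the arcs of $A\cap\Sigma_s$ by an edgemost-disc argument using incompressibility of $\Sigma_s$ and irreducibility of $\Sigma\times I$ (after sliding the arc of $\bdy A$ onto $\gamma$); your route through incompressibility and $\bdy$-incompressibility of $F=\Sigma_s\setminus N(\gamma)$, via the product decomposition of $M$ along $F\sqcup F'$, is a more systematic packaging of the same surgery and is fine for circles and for same-sided arcs.

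The genuine gap is exactly where you flagged it, and your proposed escape route fails. Your parenthetical claim that ``$M$ carries no essential annulus with both boundary curves on a single $T_\gamma$'' is false: when $\gamma$ and $\gamma'$ are isotopic on $\Sigma$, the frontier of a regular neighbourhood of $N(\gamma)\cup\big(\gamma\times[s,t]\big)\cup N(\gamma')$ meets $\bdy N(\gamma)$ in an annulus whose complementary annulus is essential in $M$ with both boundary circles on $T_\gamma$ --- this is precisely the second case of Lemma \ref{lem:only_annulus_obvious}, which the present lemma is meant to feed into. Consequently ``reassemble the rectangles to show $A$ is $\bdy$-parallel, contradiction'' cannot be the endgame: essentiality of $A$ is not contradicted in this configuration, so the spanning-arc analysis must instead terminate by producing an isotopy of $A$ that removes the spanning arcs (equivalently, by showing directly that $\iota\big(\bdy A, \lambda_\gamma\big)=0$, so that $\bdy A$ is longitudinal and the arcs never appear). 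The conclusion of the lemma does hold for the annuli above --- their boundaries are longitudes --- but your argument as written would ``prove'' them inessential, which is a sign the induction you sketch is not just delicate bookkeeping but is missing its actual terminal step. Until the case $\bdy A\subset T_\gamma$ with $\bdy A$ transverse to $\Sigma_s$ is genuinely ruled out (or shown to force longitudinal slope), the proof is incomplete; note the paper's own proof quietly assumes every arc of $A\cap\Sigma_s$ cuts off a disc of $A$, which is automatic only when $\bdy A$ meets both $N(\gamma)$ and $N(\gamma')$, so this is the one case where something beyond both write-ups is needed.
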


    \begin{proof}
    Let $A\cap N(\gamma)\subset\bdy A$ form a nontrivial curve on $N(\gamma)$. We will consider arcs of intersection of $A$ with the surface $\Sigma_s$. Let $\alpha$ be such an arc in $A\cap\Sigma_s$. As $N(\gamma')\cap\Sigma_s$ is empty, both ends of $\alpha$ are on $N(\gamma)$; thus, $\alpha$ co-bounds a disc on $A$ with an arc of $\bdy A$. Take an edgemost disc $D$ on $A$ such that the arc of $\bdy A$ is a single connected arc $\beta$. As $\beta$ is contained in $N(\gamma)\setminus\Sigma_s$, it can be isotoped to lie on $\gamma\subset\Sigma_s$. Then $\bdy D$ bounds a disc $E$ on $\Sigma_s$, as $\Sigma_s$ is incompressible. Gluing $D$ and $E$ through $\bdy D$ forms a sphere in $\Sigma\times I$. As $\Sigma\times I$ is irreducible, this sphere bounds a ball $B$. As $D$ is assumed to be edgemost, it can be removed by isotopy of $A\cap B$ through $B$. After finitely many such isotopies, all arcs of intersection in $A\cap\Sigma_s$ are removed.

    As all arcs of intersection in $A\cap\Sigma_s$ have been removed, i.e. $A\cap\Sigma_s$ is disjoint from $\bdy N(\gamma)$, components of $A\cap N(\gamma)\subset\bdy A$ lie completely on one side of $\Sigma_s$, parallel to $\bdy N(\gamma)\cap\Sigma_s$. If $\bdy A\subset\bdy N(\gamma)$, both components of $A$ are such curves on $\bdy N(\gamma)$, as $A$ has no self intersections; if $\bdy A\cap N(\gamma')$ is nonempty, it is a curve parallel to $\bdy N(\gamma')\cap\Sigma_t$ by identical argument on $A\cap\Sigma_t$.
    \end{proof}

\begin{lemma}\label{lem:only_annulus_obvious}
    Let $\gamma,\gamma'$ lie in $\Sigma\times I$ such that there exists an essential annulus $A$ in $\Sigma\times I\setminus\big(N(\gamma)\cup N(\gamma')\big)$ with boundary on $\bdy N(\gamma)\cup \bdy N(\gamma')$, as in Lemma \ref{lem:longitudinal_annulus}. Then $\gamma$ and $\gamma'$ are in the same isotopy class on $\Sigma$, and $\bdy A$ either:
    \begin{itemize}
        \item intersects both of $N(\gamma),N(\gamma')$, and is isotopic to $\gamma\times(s,t)$, or
        \item is contained in $\bdy N(\gamma)$, and $A$ co-bounds a solid torus containing $\gamma\times(s,t)$ that has core curve $\gamma'$ with an annulus $A'\subset\bdy N(\gamma)$, up to relabelling of $\gamma,\gamma'$.
    \end{itemize}
\end{lemma}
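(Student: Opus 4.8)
The plan is to combine Lemma~\ref{lem:longitudinal_annulus} with three standard facts about the product $\Sigma\times I$: it is irreducible; its level surfaces are incompressible; and, since $\operatorname{genus}(\Sigma)\ge 2$, it contains no incompressible torus while every essential annulus in it is isotopic to a vertical one $\delta\times I$ over an essential simple closed curve $\delta\subset\Sigma$ (the classification of essential surfaces in an $I$-bundle over a surface). By Lemma~\ref{lem:longitudinal_annulus} the two boundary circles of $A$ are longitudes, each parallel on its torus to $\partial N(\gamma)\cap\Sigma_s$ or to $\partial N(\gamma')\cap\Sigma_t$. I would split into two cases: these circles lie on different tori, or on the same torus --- the latter being, after relabelling $\gamma\leftrightarrow\gamma'$, the case that both lie on $\partial N(\gamma)$.

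In the first case I would cap $A$ off by adjoining to each boundary longitude the radial annulus in the corresponding solid torus joining it to the core, producing an embedded annulus $\hat A\subset\Sigma\times I$ with $\partial\hat A=\gamma\sqcup\gamma'$. This $\hat A$ is essential: a compressing disc would make $\gamma$ (equivalently $\gamma'$) null-homotopic in $\pi_1(\Sigma\times I)\cong\pi_1(\Sigma)$, contradicting essentiality, and $\hat A$ is not boundary-parallel since $\partial\hat A$ meets the two distinct interior level surfaces $\Sigma_s\ne\Sigma_t$ and so cannot be pushed into one component of $\partial(\Sigma\times I)$. By the classification, $\hat A$ is isotopic to a vertical annulus $\delta\times I$ (it cannot be horizontal, as $\Sigma$ is not an annulus), so $\gamma\simeq\delta\simeq\gamma'$ on $\Sigma$ and, undoing the capping, $A\simeq\gamma\times(s,t)$. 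This is the first alternative.

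In the second case, write $\partial A=\ell_1\sqcup\ell_2$; these parallel longitudes cut $\partial N(\gamma)$ into annuli $A'_1,A'_2$, and $T_i:=A\cup A'_i$ ($i=1,2$) are tori disjoint from $\operatorname{int}N(\gamma)$. Each $T_i$ is compressible and not contained in a ball (it carries the essential free-homotopy class of $\gamma$, via the core of $A'_i$), so bounds a solid torus $V_i$. I would then check that $\operatorname{int}N(\gamma)$ cannot lie in both $V_1$ and $V_2$: deleting $N(\gamma)$ from whichever contains it leaves a solid torus with the \emph{other} $T_j$ as boundary (routine, using that the core of $N(\gamma)$ is primitive in $\pi_1(\Sigma\times I)$, so $N(\gamma)$ sits unknotted there), and if this held for both indices then $\Sigma\times I$ would be a union of two solid tori along a torus --- impossible, as $\Sigma\times I$ has nonempty boundary of genus $\ge 2$. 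So for some $i$ the solid torus $V_i$ avoids $\operatorname{int}N(\gamma)$; then $V_i\subset Y:=\Sigma\times I\setminus\operatorname{int}N(\gamma)$ meets $\partial Y$ exactly in $A'_i$, which exhibits $A$ as boundary-parallel in $Y$ with parallelism region $V_i$, whose core is isotopic to the core of $A'_i$ and hence to $\gamma$. Since $A$ is essential --- in particular not boundary-parallel --- in $X:=\Sigma\times I\setminus(\operatorname{int}N(\gamma)\cup\operatorname{int}N(\gamma'))$, the region $V_i$ cannot lie in $X$; as $N(\gamma')$ is disjoint from $\partial V_i=T_i$, this forces $N(\gamma')\subset V_i$. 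Finally $\gamma'$ lies on the level surface $\Sigma_t$, so after isotoping $\Sigma_t$ to make $\Sigma_t\cap V_i$ incompressible in the solid torus $V_i$, the component $F$ of $\Sigma_t\cap V_i$ through $\gamma'$ is an incompressible surface in a solid torus on which $\gamma'$ is essential (otherwise $\gamma'$ would bound a disc in $\Sigma\times I$), hence a boundary-parallel annulus, so $\gamma'$ is isotopic to its core and thus to the core of $V_i$. Therefore $\operatorname{core}(V_i)\simeq\gamma'$, whence $\gamma\simeq\gamma'$, and $V_i$ --- with $\partial V_i=A\cup A'_i$, $A'_i\subset\partial N(\gamma)$, containing $N(\gamma')$ and hence the vertical annulus $\gamma\times(s,t)$, and with core $\gamma'$ --- is the solid torus of the second alternative, up to the relabelling permitted in the statement. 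Both cases produce $\gamma\simeq\gamma'$, completing the argument.

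The hard part will be the second case, in particular the claim that $A$ must be boundary-parallel in $\Sigma\times I\setminus N(\gamma)$ --- equivalently, that this manifold has no essential annulus with both boundary circles longitudinal on $\partial N(\gamma)$. This rests on atoroidality of $\Sigma\times I$ forcing the capped-off torus $T_i$ to bound a solid torus, on tracking which side of $T_i$ the neighbourhood $N(\gamma)$ occupies, and on ruling out a genus-one Heegaard splitting of $\Sigma\times I$. The remaining point --- that $\gamma'$ is exactly the core of the parallelism solid torus rather than a knotted winding-one pattern in it --- is comparatively routine, using that $\gamma'$ lies on an incompressible level surface.
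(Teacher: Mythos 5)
Your strategy is genuinely different from the paper's. For the first case you cap $A$ off inside the two solid tori to an annulus $\hat A$ with $\bdy\hat A=\gamma\sqcup\gamma'$ and appeal to the classification of essential annuli in $I$-bundles; for the second case you cap $A$ off with the two complementary annuli of $\bdy N(\gamma)$ to obtain tori $T_i$, use irreducibility and atoroidality to make each bound a solid torus $V_i$, and track which side contains $N(\gamma)$. The second-case argument is sound and arguably cleaner than the paper's (which instead analyses $A\cap\Sigma_t$ directly), with two caveats. The claim you call routine --- that $V_1\setminus\operatorname{int}N(\gamma)$ is a solid torus because $\gamma$ is primitive --- is not routine (primitivity gives homotopic to the core, not unknotted), but you do not need it: if $\operatorname{int}N(\gamma)$ lay in both $V_1$ and $V_2$ then $V_1\cup V_2=\Sigma\times I$, which is impossible because a solid torus with boundary a torus in the interior of $\Sigma\times I$ cannot contain a genus $\ge 2$ component of $\bdy(\Sigma\times I)$. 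You should also justify that $A$ and $A_i'$ are \emph{longitudinal} annuli on $\bdy V_i$ (again by primitivity of $\gamma$ in $\pi_1(\Sigma)$), since that is what makes $V_i$ a product region $A\times I$ rather than a twisted one.

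The genuine gap is in the first case. The annulus $\hat A$ has its boundary on the interior level surfaces $\Sigma_s,\Sigma_t$, so it is not properly embedded; the classification of essential annuli in $\Sigma\times I$ does not apply to it as stated, and your ``not boundary-parallel'' criterion is not meaningful for such an annulus. The weak conclusion $\gamma\sim\gamma'$ is easily rescued: $\hat A$ is a free homotopy from $\gamma$ to $\gamma'$ in $\Sigma\times I\simeq\Sigma$, and freely homotopic essential simple closed curves on a surface are isotopic (this is essentially the paper's argument). But the lemma also asserts that $A$ itself is isotopic to $\gamma\times(s,t)$, and ``undoing the capping'' does not establish this: an ambient isotopy carrying $\hat A$ to a vertical annulus need not respect $N(\gamma)\cup N(\gamma')$, and what is used downstream in Proposition \ref{prop:hyperbolic_LuC} is precisely that $A$ is parallel to $\gamma\times(s,t)$ \emph{in the complement of the drilled curves}, so that it must be punctured by the intermediate $C_k$'s. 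The paper spends most of its proof on exactly this point: putting $A$ in general position with respect to $\gamma\times(s,t)$, removing trivial curves of intersection via incompressibility and irreducibility, and cancelling the remaining parallel essential curves by innermost solid-torus isotopies until $A$ and $\gamma\times(s,t)$ cobound a product region. You need some version of that intersection argument, or a properly embedded replacement for $\hat A$ to which Waldhausen's classification genuinely applies together with a reason the resulting isotopy can be carried out away from $N(\gamma)\cup N(\gamma')$.
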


    \begin{proof}
        Consider the first case. The annulus $A$ describes an isotopy between $\gamma$ and $\gamma'$ as by Lemma \ref{lem:longitudinal_annulus}, $\bdy A$ is composed of a longitude of each of $N(\gamma)$ and $N(\gamma')$, parallel to $\bdy N(\gamma)\cap\Sigma_s$ and $\bdy N(\gamma')\cap\Sigma_t$, respectively. As the fundamental group $\pi_1(\Sigma\times I)$ of $\Sigma\times I$ is the same as $\pi_1(\Sigma)=\pi_1(\Sigma_s)=\pi_1(\Sigma_t)$, $\gamma$ and $\gamma'$ are in the same isotopy class on $\Sigma$.

        Consider $A\cap\big(\gamma\times(s,t)\big)$. As $\bdy A$ is composed of longitudes of $N(\gamma),N(\gamma')$ as above, there are no arcs in $A\cap \big(\gamma\times(s,t)\big)$. If a component of $A\cap \big(\gamma\times(s,t)\big)$ bounds a disc $D$ in $\gamma\times(s,t)$, it also bounds a disc $E$ in $A$, as $A$ is incompressible, and vice-versa; take an innermost such disc on $A$. Gluing $D$ to $E$ through $A\cap \big(\gamma\times(s,t)\big)$ forms a sphere in $\Sigma\times I$, which bounds a ball as $\Sigma\times I$ is irreducible. Isotoping $E$ through this ball removes the trivial component of $A\cap\gamma\times(s,t)$, and after finitely many such isotopies, $A\cap\gamma\times(s,t)$ has no trivial components. Thus $A\cap \big(\gamma\times(s,t)\big)$ is composed of curves that are nontrivial on both $A$ and $\gamma\times(s,t)$. Thus $A\cap\big(\gamma\times(s,t)\big)$ divides $A$ and $\gamma\times(s,t)$ into parallel annuli. Take an innermost sub-annulus $A'$ of $A$ where $\gamma\subset\bdy A'$. Then $A'$ co-bounds a solid torus $T$ disjoint from $\gamma$ in $\Sigma\times I$ with a sub-annulus $A''$ of $\gamma\times(s,t)$. The sub-annulus $A'$ may be isotoped past $A''$ through $T$ to remove a component of $A\cap\big(\gamma\times(s,t)\big)$, except if $\gamma'\subset T$. In that case, take a similar innermost annulus $\bar A'$ in $A$ such that $\gamma'\subset \bdy \bar A'$, co-bounding a solid torus $T'$ with a sub-annulus $\bar A''$ of $\gamma\times(s,t)$; then $\gamma\not\subset T'$ and a similar isotopy of $\bar A'$ is possible. To see this, note $N(\gamma)\not\subset T$, so any solid torus containing $\gamma$ has boundary that is not contained in $T$, but $\bar A'$ originates inside $T$, and so must exit $T$ via $A''\subset\gamma\times(s,t)$; then any $\bar A'\not\subset T$ is not innermost, so $\gamma\not\subset T'$ and isotoping $\bar A'$ past $\bar A''$ through $T'$ removes components of $A\cap\big(\gamma\times(s,t)\big)$. Thus, after finitely many isotopies, $A\cap\big(\gamma\times(s,t)\big)$ is empty, and thus $A$ co-bounds a solid torus in $(\Sigma\times I)\setminus\big(N(\gamma)\cup N(\gamma')\big)$ with $\gamma\times(s,t)$, through which $A$ may be isotoped into the form of $\gamma\times(s,t)$.

        Now consider the second case, $\bdy A\subset \bdy N(\gamma)$. Again by Lemma \ref{lem:longitudinal_annulus}, $\bdy A$ is composed of two longitudes of $N(\gamma)$, and thus $A$ co-bounds a solid torus $T$ in $\Sigma\times I$ disjoint from $\gamma$ with an annulus in $\bdy N(\gamma)$; note $\gamma'\subset T$, as $A$ is essential. Trivial curves of $A\cap\Sigma_t$ are removable as both surfaces are incompressible; isotope $A$ such that $A\cap\Sigma_t$ is composed of nontrivial curves in $A$ parallel to $\bdy A$. As $\bdy A$ is composed of longitudes of $N(\gamma)$, any sub-annulus of $A$ describes an isotopy of $\gamma$, and as such any nontrivial component of $A\cap\Sigma_t$ is isotopic to $\gamma$ in $\Sigma$. As $\gamma'$ is a nontrivial curve and $\gamma'\subset T\cap\Sigma_t$, $\gamma'$ is contained in an annulus with boundaries isotopic to $\gamma$, and thus $\gamma'$ is in the same isotopy class as $\gamma$ on $\Sigma$.

        As $\gamma$ and $\gamma'$ are in the same isotopy class, there exists an annulus $A'\subset T$ which describes the isotopy between them, of which $T$ forms a neighbourhood. By the above, $A'$ may be isotoped into the form $\gamma\times(s,t)$; performing a comparable isotopy on $A$ gives the required result.
    \end{proof}

\begin{lemma}\label{lem:only_torus_obvious}
    Let $M=\Sigma\times I\setminus\big(N(\gamma)\cup N(\gamma')\big)$ is as in Lemma \ref{lem:longitudinal_annulus}. Then $M$ is toroidal if and only if $\gamma$ and $\gamma'$ are in the same isotopy class on $\Sigma$, in which case the only essential torus contains both $\gamma$ and $\gamma'$, and is isotopic to the boundary of a neighbourhood of $\gamma\times(s,t)$.
\end{lemma}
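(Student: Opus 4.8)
I would prove the biconditional by treating the two implications separately; the reverse implication is short and the forward one carries the weight.

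For the reverse implication, suppose $\gamma\sim\gamma'$ on $\Sigma$. Tracing out the isotopy between $\gamma\times\{s\}$ and $\gamma'\times\{t\}$ gives an embedded annulus $\hat A\subset\Sigma\times I$ whose ends are cores of $N(\gamma)$ and $N(\gamma')$, and a regular neighbourhood $N$ of $\hat A\cup N(\gamma)\cup N(\gamma')$ is a solid torus containing $N(\gamma),N(\gamma')$, each isotopic to its core; set $T_0=\bdy N$. I would then check $T_0$ is essential. It is incompressible from the inside because $N\setminus\big(N(\gamma)\cup N(\gamma')\big)$ is homeomorphic to $(\text{pair of pants})\times S^1$, whose boundary tori are $\pi_1$-injective. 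It is incompressible from the outside because a compressing disc there would, using irreducibility of $\Sigma\times I$ and the fact that $\bdy(\Sigma\times I)$ has genus $\geq 2$, force $N$ together with a $2$-handle to be a ball, exhibiting $\gamma$ as nullhomotopic in $\Sigma\times I$ and contradicting that $\gamma$ is essential. And it is not boundary-parallel because neither complementary region of $T_0$ in $M$ is an $I$-bundle over the torus: one has three torus boundary components, the other has boundary of genus $\geq 2$. Hence $M$ is toroidal, with $T_0$ the asserted torus.

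For the forward implication, let $T\subset M$ be an essential torus. Since $\pi_1(\Sigma\times I)=\pi_1(\Sigma)$ is a surface group and contains no $\ZZ\oplus\ZZ$, $T$ is compressible in $\Sigma\times I$; fix a compressing disc $E$, which must meet $N(\gamma)\cup N(\gamma')$, else $T$ would compress in $M$. Compressing $T$ along $E$ yields a $2$-sphere, which bounds a ball by irreducibility of $\Sigma\times I$; since $\bdy(\Sigma\times I)$ consists of surfaces of genus $\geq 2$, that ball cannot be the component cut off by the sphere carrying $\bdy(\Sigma\times I)$, and it follows that $T$ bounds a solid torus $V\subset\Sigma\times I$ on the side of $E$. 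As $T=\bdy V$ is disjoint from $\bdy N(\gamma),\bdy N(\gamma')$, each of $N(\gamma),N(\gamma')$ lies entirely inside or entirely outside $V$; $E$ meets one of them, say $N(\gamma)$, forcing $N(\gamma)\subset V$, and if $N(\gamma')$ lay outside then $V\setminus N(\gamma)$ would be a collar $T^2\times I$ in $M$ making $T$ boundary-parallel, a contradiction, so $N(\gamma')\subset V$ too. Now $\bdy N(\gamma)$ is incompressible in $M$ (a compression would bound a disc along a curve freely homotopic to $\gamma$), and an innermost-disc argument shows any compressing disc for $\bdy N(\gamma)$ in $V\setminus N(\gamma)$ can be isotoped off $N(\gamma')$ into $M$ --- so $\bdy N(\gamma)$ is incompressible in $V\setminus N(\gamma)$. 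Hence $N(\gamma)$ is not contained in a ball in $V$, so its winding number $p$ in $V$ satisfies $p\geq 1$; and $p\geq 2$ is impossible since the core of $N(\gamma)$ would then be freely homotopic in $\pi_1(\Sigma)$ to a proper power of the (nontrivial) class of the core of $V$, which no simple closed curve is. So $p=1$, and likewise for $N(\gamma')$, whence $\gamma$ and $\gamma'$ are each freely homotopic in $\pi_1(\Sigma)$ to the core of $V$, hence to one another, so $\gamma\sim\gamma'$ on $\Sigma$. Finally, because the level curve $\gamma$ has atoroidal complement in $\Sigma\times I$, the winding-number-one sub-solid-tori $N(\gamma),N(\gamma')$ must be core-parallel in $V$; then $V$ is a solid torus with incompressible boundary and core isotopic to $\gamma$, so by Lemma \ref{lem:only_annulus_obvious} it is isotopic to $N(\gamma\times(s,t))$ and $T=\bdy V$ is the claimed torus. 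The same analysis applied to any essential torus shows it is unique up to isotopy.

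The step I expect to be the main obstacle is controlling how $N(\gamma)$ and $N(\gamma')$ sit inside $V$: specifically, establishing that a simple closed curve lying in a level surface of $\Sigma\times I$ has atoroidal complement, which is what rules out a knotted winding-number-one pattern inside $V$ and is needed to conclude that $N(\gamma),N(\gamma')$ are core-parallel and hence to pin down $T$ up to isotopy. (The weaker conclusion $\gamma\sim\gamma'$ requires only the proper-power obstruction, not atoroidality.) A secondary technical point is the clean passage from ``$T$ compresses in $\Sigma\times I$'' to ``$T$ bounds a solid torus'', where the genus of $\bdy(\Sigma\times I)$ does the work of ruling out the degenerate case.
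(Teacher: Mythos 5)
Your argument reaches the right conclusions but travels a genuinely different road for the forward implication. The paper, having observed that an essential torus must bound a solid torus $T$ in $\Sigma\times I$ containing both curves, works entirely with the intersection pattern of $\bdy T$ with the level surfaces $\Sigma_s$ and $\Sigma_t$: the nontrivial components of $\bdy T\cap\Sigma_s$ are isotopic to $\gamma$, those of $\bdy T\cap\Sigma_t$ to $\gamma'$, and these curve families are parallel on $\bdy T$, which simultaneously produces the isotopy $\gamma\sim\gamma'$ and pins down the position of the torus. You instead argue algebraically: the winding number of $N(\gamma)$ in your solid torus $V$ is at least $1$ (incompressibility of $\bdy N(\gamma)$) and at most $1$ (essential simple closed curves on a surface are not freely homotopic to proper powers), so both cores are freely homotopic to the core of $V$ and hence to each other. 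This cleanly delivers $\gamma\sim\gamma'$ with less cut-and-paste bookkeeping, but it defers the identification of the torus up to isotopy to a separate core-parallelism step, which you correctly flag as the real obstacle and do not complete; the paper's level-surface argument gets that identification essentially for free. Your verification of the reverse implication (incompressibility of $\bdy N$ via the pair-of-pants-times-$S^1$ structure, non-boundary-parallelism by counting boundary components) is more careful than the paper's, which leans on Lemma \ref{lem:only_annulus_obvious}. Two small cautions: the passage from ``$T$ compresses in $\Sigma\times I$'' to ``$T$ bounds a solid torus'' is not secured merely by noting that the surgered sphere's ball misses $\bdy(\Sigma\times I)$ --- you must also exclude the case that $T$ itself lies inside that ball, which is where the essentiality of $\gamma,\gamma'$ (rather than the genus of the boundary) does the work; and your final citation of Lemma \ref{lem:only_annulus_obvious} to identify $V$ with $N\big(\gamma\times(s,t)\big)$ is not quite what that lemma asserts, since it concerns essential annuli rather than solid tori. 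Both points are elided at a comparable level in the paper's own proof, so neither is fatal, but they are where a fully rigorous write-up would need to spend its effort.
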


    \begin{proof}
        As $\Sigma\times I$ is atoroidal, any essential torus in $(\Sigma\times I)\setminus\big(N(\gamma)\cup N(\gamma')\big)$ bounds a solid torus $T\subset\Sigma\times I$ that contains $\gamma$ and/or $\gamma'$. Let $\gamma\subset T$; then $\gamma$ is homotopic to the core curve of $T$. As $\gamma$ is a nontrivial curve on an incompressible surface, the component of $T\cap\Sigma_s$ containing $\gamma$ is an annulus up to isotopy, and thus nontrivial components of $\bdy T\cap\Sigma_s$ are isotopic to $\gamma$ in $\Sigma_s$. If $\gamma'\not\subset T$, $\bdy T$ is boundary parallel, which is a contradiction; thus $\gamma'\subset T$. Then, by identical argument, nontrivial components of $\bdy T\cap\Sigma_t$ are isotopic to $\gamma'$. As $\Sigma_s$ is disjoint from $\Sigma_t$, nontrivial curves in $\bdy T\cap\Sigma_s$ are parallel in $\bdy T$, and thus isotopic, to those  to $\bdy T\cap\Sigma_t$. This gives an isotopy between $\gamma$ and $\gamma'$ in $\Sigma\times I$, and thus $\gamma$ and $\gamma'$ are in the same isotopy class in $\Sigma$.

        By Lemma \ref{lem:only_annulus_obvious}, there exists an essential annulus $\gamma\times(s,t)$ in $M$ when $\gamma$ and $\gamma'$ are in the same isotopy class on $\Sigma$. As $\gamma,\gamma'$ are isotopic to the core curve of $T$, $T$ is isotopic to a neighbourhood of $\gamma\times(s,t)$, as required. 
    \end{proof}
    
\begin{lemma}\label{lem:FAL_restrict_ess_surface}
    Let $L$ be a fully augmented link in a manifold $M$. Suppose $S$ is an essential surface with boundary, with components of $\bdy S$ on a link component $L^*\subset L$ in $M\setminus L$. Then either:

    \begin{itemize}
        \item components of $\bdy S\cap L^*$ are pure meridians or longitudes on $N(L^*)$ when $L^*$ is a projection component or crossing circle, respectively, or
        \item $S$ nontrivially intersects at least one crossing disc $D$.
    \end{itemize}
\end{lemma}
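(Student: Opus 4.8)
The plan is to prove the contrapositive. Assume $S$ does not intersect any crossing disc nontrivially; we will show that every component of $\partial S$ on the torus $\partial N(L^*)$ is a meridian of $L^*$ when $L^*$ is a projection component, and a longitude of $L^*$ when $L^*$ is a crossing circle. The first step is to upgrade the hypothesis to genuine disjointness. In the link exterior, each crossing disc $D$ becomes a pair of pants $\widehat D = D\cap(M\setminus N(L))$, one of whose boundary circles lies on $\partial N(C)$ for the crossing circle $C$ bounding $D$, the other two being meridians of the projection component(s) puncturing $D$. If $S\cap D$ consists only of circles and arcs that are inessential in $\widehat D$, then the inessential circles are removed using incompressibility of $S$ together with irreducibility of the exterior, and the inessential arcs are removed using $\partial$-incompressibility of $S$: an outermost such arc cuts off a disc in $\widehat D$ meeting $S$ in a single arc and $\partial N(L)$ in a single arc, i.e.\ a $\partial$-compressing disc, which is therefore inessential and along which $S$ may be isotoped off $D$. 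Iterating over all crossing discs, we may assume $S$ is disjoint from every crossing disc. By the same sort of innermost-disc argument we may also assume no component of $\partial S$ bounds a disc in $\partial N(L)$: such a curve is null-homotopic in the exterior, hence bounds a disc in the incompressible surface $S$, producing a sphere that bounds a ball across which $S$ can be pushed. Thus every component of $\partial S$ on $\partial N(L^*)$ is essential on that torus.

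Next I would read off the slope directly from the crossing discs that $S$ must now avoid. If $L^* = C_k$ is a crossing circle, the unique crossing disc meeting it is $D_k$, and $\widehat D_k$ meets $\partial N(C_k)$ in the single curve $\lambda_k$ bounding $D_k$ in $M$ — a longitude of $C_k$ (parallel to the core, so meeting the meridian of $C_k$ once). Since $S$ is disjoint from $D_k$, every component of $\partial S$ on $\partial N(C_k)$ is an essential simple closed curve on a torus disjoint from $\lambda_k$, hence isotopic to $\lambda_k$: these are longitudes. If instead $L^*$ is a projection component, then $L^*$ punctures at least one crossing disc (it meets a crossing circle, since $\pi(L)$ is cellular), and each crossing disc punctured by $L^*$ meets $\partial N(L^*)$ in a meridian of $L^*$; all these meridians are mutually parallel, so their union is a nonempty collection of disjoint essential curves on $\partial N(L^*)$ whose complement is a union of annuli, each with meridional core. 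Since $S$ avoids every crossing disc, each component of $\partial S$ on $\partial N(L^*)$ is disjoint from all of these meridians, hence lies in one of the annuli, hence is isotopic to a meridian. This establishes the contrapositive, and with it the lemma.

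I expect the main obstacle to be the first step — turning ``does not intersect nontrivially'' into honest simultaneous disjointness from all the crossing discs — rather than the slope computation, which is just the classification of simple closed curves on a torus that avoid a fixed essential curve (or a fixed parallel family). Care is needed to see that ``inessential in $\widehat D$'' is exactly the complement of the ``nontrivial intersection'' appearing in the statement, that the removal of inessential arcs and circles terminates, and that these removals can be performed for all crossing discs at once without recreating intersections; this is where essentiality of $S$ and irreducibility of the exterior are used, via the same innermost-disc and outermost-arc techniques that underlie the hyperbolicity analysis of fully augmented links elsewhere in the paper.
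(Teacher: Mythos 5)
Your proposal is correct and follows essentially the same route as the paper's proof: both hinge on the observation that a crossing disc meets $\partial N(L^*)$ in a longitude (crossing circle) or meridian (projection component), so that disjointness from the disc pins down the boundary slope on the torus, while any other slope forces nonzero intersection number with $\partial N(L^*)\cap D$ and hence an essential intersection with $D$. Your version simply fills in the standard innermost-disc/outermost-arc cleanup (and the cellularity remark guaranteeing a crossing disc exists for a projection component) that the paper leaves implicit.
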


    \begin{proof}
        Given a crossing disc $D$, $\bdy N(L)\cap D$ is a longitude for the crossing circle bounding $D$ and a meridian for the projection components intersecting $D$. Now let $D$ be a crossing disc that intersects $L^*$; if $S$ is disjoint from $D$, $N(L^*)\cap\bdy S$ is isotopic to $\bdy N(L^*)\cap D$, i.e.~a longitude if $L^*$ is a crossing circle and a meridian if $L^*$ is a projection, as in the statement of the lemma. If not, the curve $N(L^*)\cap\bdy S$ has nonzero intersection number with $N(L^*)\cap D$, and $S\cap D$ is nonempty. 
    \end{proof}

We return to $\{L_m\}_{m\in\NN}$ as obtained by Constructions \ref{const:doubled_thickened_surface} and \ref{const:mapping_torus}. 

\begin{proposition}\label{prop:hyperbolic_LuC}
    Assume the base links $L$ or $L\cup L'$ have hyperbolic complement in their respective manifolds obtained in Constructions \ref{const:doubled_thickened_surface} and \ref{const:mapping_torus}. The families of links $\{L_m\}_{m\in\NN}=L\cup \mathcal{C}_m$ obtained in these constructions, i.e.~ the base links with layered curves added, also have hyperbolic complements.
\end{proposition}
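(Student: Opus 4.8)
The plan is to invoke Thurston's hyperbolisation theorem \cite{Thurston_Hyperbolization}: each $M\setminus L_m$ is a compact orientable $3$-manifold whose boundary is a nonempty union of tori, so it carries a complete finite-volume hyperbolic metric as soon as it is irreducible, boundary-irreducible, atoroidal, and anannular (these exclude every non-hyperbolic possibility, the Seifert-fibred ones being ruled out by anannularity together with the incompressible boundary). I would verify these four properties using only that the base link complement $M\setminus L$ (respectively the complement of $L\cup L'$) is hyperbolic, together with the explicit product structure of $M$ near the layered curves.

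\textbf{Irreducibility and boundary-irreducibility.} Since $\Sigma$, and every parallel copy $\Sigma\times\{t\}$ carrying a curve $C_i$, is incompressible in $M$, each $C_i$ is nontrivial in $\pi_1(M)$, hence in $\pi_1(M\setminus L)$ and in $\pi_1(M\setminus L_m)$. A sphere in $M\setminus L_m$ bounds a ball $B$ in the irreducible manifold $M\setminus L$; an essential curve is not nullhomotopic, so $B$ contains no $C_i$ and the sphere already bounds a ball in $M\setminus L_m$. A compressing disc for a boundary torus coming from $L$ would be a compressing disc in the hyperbolic manifold $M\setminus L$, while a compressing disc for some $\partial N(C_i)$ would make $C_i$ bound a disc in $M$; both contradict the hypotheses.

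\textbf{Atoroidal and anannular.} This is the crux. I would slice $M\setminus L_m$ along a family of level surfaces $\Sigma\times\{s_k\}$, placed so that exactly one $C_i$ lies between consecutive slices and the base link(s) occupy the remaining chunk(s); each slice is disjoint from $L_m$ and, being incompressible in $M$, is incompressible in $M\setminus L_m$. Given a candidate essential annulus or torus $S$, I would put it in minimal position with respect to the slices, so that each piece of $S$ is essential in its chunk. A chunk around a single $C_i$ is $\Sigma\times I$ with one curve removed, and Lemmas \ref{lem:longitudinal_annulus}--\ref{lem:only_torus_obvious} restrict its essential annuli and tori to the ``obvious'' vertical ones: a piece $\delta\times(s_k,s_{k+1})$ with $\delta$ disjoint from the class of $C_i$, or the boundary of a tube about such. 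These cannot be assembled across chunks: a vertical piece surviving an odd chunk and an even chunk would need $\delta$ disjoint from both $\gamma_{odd}$ and $\gamma_{even}$, and in any case cellularity of $\pi(L)$ forces $\delta$ to meet $\pi(L)$, so no vertical piece survives a base-link chunk; meanwhile an ``obvious'' torus or tube about $C_i$ carries a parallel copy of $\gamma_{odd}$ (say), which by $\iota(\gamma_{odd},\gamma_{even})\geq 1$ meets the neighbouring $C_{i\pm1}$ of the opposite parity. Propagating these incompatibilities forces $S$ to lie entirely in a base-link chunk, hence in $M\setminus L$, contradicting its hyperbolicity. In Construction \ref{const:mapping_torus} one additionally uses $\phi(\gamma_{odd})\not\sim\pm\gamma_{odd}$ and $\phi(\gamma_{even})\not\sim\pm\gamma_{even}$ to prevent a vertical surface from closing up around the monodromy.

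\textbf{Main obstacle.} The bookkeeping in the last step is where the real work lies: making the minimal-position and innermost-disc reductions precise, enumerating how the pieces of $S$ meet the tori $\partial N(C_i)$ and the crossing discs of $L$ (here Lemma \ref{lem:FAL_restrict_ess_surface} controls boundary curves that lie on $L$), and confirming that the two closing-up constructions behave identically away from the slices. I expect the genuinely delicate case to be an essential annulus with a boundary component on some $\partial N(C_i)$, where one must exclude a cabling annulus using the second alternative of Lemma \ref{lem:only_annulus_obvious} together with the interleaving of the two curve families.
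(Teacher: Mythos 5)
Your proposal takes essentially the same route as the paper: Thurston hyperbolisation, the same irreducibility and boundary-irreducibility arguments (capping meridional discs by trivial filling of $C_i$, pushing reducing spheres into $M\setminus L_0$), and Lemmas \ref{lem:longitudinal_annulus}--\ref{lem:FAL_restrict_ess_surface} combined with cellularity of $\pi(L)$ and $\iota(\gamma_{odd},\gamma_{even})\geq 1$ to show that every candidate essential annulus or torus is punctured by an opposite-parity layered curve or by the base link. One point to make explicit when you carry out the bookkeeping: in Construction \ref{const:mapping_torus} the hypotheses $\phi(\gamma_{odd})\not\sim\pm\gamma_{odd}$ and $\phi(\gamma_{even})\not\sim\pm\gamma_{even}$ do not by themselves exclude an annulus between curves of \emph{opposite} parity running through the monodromy (which exists when $\phi(\gamma_{even})\sim\gamma_{odd}$); the paper disposes of it by noting it is punctured by the paired curve of higher index, a case your interleaving argument covers but which you attribute to the wrong condition.
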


    \begin{proof}
    Let $M$ be either $\Sigma\times S^1$ or $(\Sigma\times I)/\phi$ containing links as specified in Constructions \ref{const:doubled_thickened_surface} and \ref{const:mapping_torus}. The base curves ($m=0$) of both constructions are hyperbolic by assumption. Let $m>0$; that is, drill the layered curves $\mathcal{C}_m$ of Construction \ref{const:layered_curves}. This proof with proceed using Thurston's hyperbolization theorem \cite{Thurston_Hyperbolization}, ruling out the existence of essential discs, spheres, annuli and tori. As $(\Sigma\times S^1)\setminus (L\cup L')$ and $(\Sigma\times I)/\phi\setminus L_0$ are hyperbolic, they contain none of these essential surfaces. Refer to the projection surfaces of $L_0$ and $L_0'$ as $\Sigma_0$ and $\Sigma_0'$, respectively.

    Suppose $M\setminus L_m$ is reducible. As $M\setminus L$ is irreducible, a reducing sphere in $M\setminus L_m$ bounds a ball in $M\setminus L$ that contains components of $\mathcal{C}_m$ in $M\setminus L_m$. This is a contradiction, as all $C_{i}\subset\mathcal{C}_m$ are essential curves in incompressible surfaces and thus not contained in any simply connected region. Thus, $M\setminus L_m$ is irreducible.

    Suppose $M\setminus L_m$ is boundary reducible. All components of $\mathcal{C}_m$ are essential curves on incompressible surfaces in $M$, and so do not bound discs; this rules out any disc with boundary a curve with longitudinal components on a neighbourhood of $C_i$. Any essential disc with boundary a meridian or trivial curve on a neighbourhood of $C_i$ may be capped by a disc in $M$ by trivially Dehn filling $C_i$ to find a reducing sphere for $M\setminus L_0$, which is a contradiction. Any reducing disk then has boundary on $L_0$. Such a disc must co-bound a ball in $M\setminus L_0$ with a disc of $\bdy N(L_0)$, as $M\setminus L_0$ is boundary irreducible; but as all $C_i$ are essential curves in incompressible surfaces of $M$, no component of $\mathcal{C}$ is contained in a ball, so such a disc cannot be essential in $M\setminus L_m$.

    As $M\setminus L_0$ is anannular, an essential annulus in $M\setminus L_m$ either has least one boundary on $\mathcal{C}_m$ or is boundary parallel in $M\setminus L_0$. Consider the former.

    By construction, all components of $\mathcal{C}_m$ sit in distinct copies of the surface $\Sigma$. By Lemma \ref{lem:only_annulus_obvious}, there exists an annulus between two of these components only if they are isotopic to the same curve on $\Sigma$, i.e.~of the same parity. Any annulus through $\Sigma_0$ is punctured by $L$ as $\pi(L)$ is cellular, and any annulus between two same-parity components of $\mathcal{C}_m$ is punctured by at least one opposite-parity $C_i$ between them. 

    If $M=\Sigma\times S^1$, there exist annuli between same-parity components of $\mathcal{C}_m$ through $\Sigma_0'$. These annuli are punctured by $L'$, as $\pi(L')$ is cellular. If $M=(\Sigma\times I)/\phi$, $\phi(\gamma_{odd})\not\sim\gamma_{odd}$ and $\phi(\gamma_{even})\not\sim\gamma_{even}$ by construction; however, there exists an annulus disjoint from $\Sigma_0$ between opposite-parity and -sign components of $\mathcal{C}_m$ if $\phi(\gamma_{even})\sim\gamma_{odd}$, or vice-versa. This annulus is punctured by the paired curve of the higher-magnitude index; for example, suppose there exists an annulus between $C_{-m}$ and $C_{m-1}$ with $m$ even, i.e.~$\phi(\gamma_{even})\sim\gamma_{odd}$. Then the annulus is punctured by $C_m$, as $\gamma_{even}\not\sim\gamma_{odd}$.

    Suppose an essential annulus $A$ has both boundaries on one $C_i$. If $A\cap\Sigma_0$ is nonempty, all components bound discs on $\Sigma_0$, as $\pi(L_0)$ is cellular. As $A$ is incompressible, $A\cap\Sigma_0$ must also bound a disc on $A$; as $M\setminus L_0$ is irreducible, these discs co-bound a sphere and so the innermost such discs are removable by isotopy of $A$. After finitely many isotopies, $A$ is disjoint from $N(\Sigma_0)$; similarly, $A$ is disjoint from $N(\Sigma_0')$. Then $A$ is as described in Lemma \ref{lem:only_annulus_obvious}, and bounds a solid torus that contains a $C_j$ where $i,j$ are of the same parity; it is then punctured by at least one $C_k$ of the opposite parity.
    
    Now suppose there exists an essential annulus $A$ between a $C_i$ and a component $L_\star$ of $L$ or $L'$. If $\bdy A$ is a meridian of $N(C_i)$, Dehn filling $C_i$ gives a compression disc for $L_0$, which is a contradiction. Thus $\bdy A\cap N(C_i)$ has a longitudinal component on $N(C_i)$. As $\pi(L)$ is cellular, $L^*$ either meets or is a crossing circle. By Lemma \ref{lem:FAL_restrict_ess_surface}, if $A$ does not intersect a crossing disc, $\bdy A\cap N(L^*)$ is either a longitude of a crossing circle or a meridian of a projection component. Trivially Dehn filling $L^*$ in $M$ caps $A$ with a disc in either case, producing a compression disc for $C_i$, which is a contradiction. Thus, again by Lemma \ref{lem:FAL_restrict_ess_surface}, $\bdy A\cap N(L^*)$ nontrivially intersects a crossing disc; but, by identical argument to Lemma \ref{lem:only_annulus_obvious}, $\bdy A\cap N(C_i)$ is a longitude, and describes an isotopy of $C_i$ onto $\bdy N(L^*)$ --- as $C_i$ lies outside a neighbourhood of $\Sigma$, $C_i$ cannot intersect a crossing disc, and this is a contradiction.

    Now suppose an essential annulus $A$ in $M\setminus L_m$ has both boundaries on $L_0$. If $\bdy A$ is on two components of $L_0$, this is an essential annulus in $M\setminus L_0$, which is a contradiction; thus $\bdy A$ is on a component $L^*\subset L_0$. As $M\setminus L_0$ is hyperbolic, $A$ bounds a solid torus containing components of $\mathcal{C}_m$. By similar argument to the above, $A$ cannot intersect a crossing disc as $C_i$ is disjoint from $N(\Sigma)$; then both components of $\bdy A$ are trivial in $M$, and trivially Dehn filling $L^*$ caps $A$ with two discs to find a sphere containing $C_i$; as previous, this is a contradiction.
    
    Finally, consider essential tori. Any essential torus $T$ in $M\setminus L_m$ either bounds a solid torus or is boundary parallel in $M\setminus L_0$, as $M\setminus L_0$ is atoroidal; thus $T$ encloses component(s) of $\mathcal{C}_m$. Suppose $T$ bounds a solid torus. If $T\cap\Sigma_0$ is nonempty, all components bound discs on $\Sigma_0$, as $\pi(L_0)$ is cellular. As $T$ is incompressible, $T\cap\Sigma_0$ must also bound a disc on $T$; as $M\setminus L_0$ is irreducible, these discs co-bound a sphere and so the innermost such discs are removable by isotopy of $T$. After finitely many isotopies, $T$ is disjoint from $N(\Sigma_0)$; similarly, $T$ is disjoint from $N(\Sigma_0')$. Then $T$ is as described in Lemma \ref{lem:only_torus_obvious}, and bounds a solid torus that contains a $C_j$ where $i,j$ are of the same parity; it is then punctured by at least one $C_k$ of the opposite parity. Suppose then $T$ is boundary parallel in $M\setminus L_0$. Then, by similar argument to lemma \ref{lem:only_torus_obvious}, $T$ bounds a solid torus in $M$ with core curve isotopic to a curve $C_i$. As $T$ is boundary parallel to $L_0$, $T\cap \Sigma_0$ then contains nontrivial curves; however, this contradicts $L_0$ being cellular. 

    As $M\setminus L_m$ contains no essential discs, spheres, annuli, or tori for all $m$, it is hyperbolic.
    \end{proof}

\begin{remark}
    The condition that $\Sigma$ is of genus at least two is required for Proposition \ref{prop:hyperbolic_LuC}. Performing an analogous construction when $\Sigma$ is genus zero or one produces a manifold with essential spheres or tori, respectively, parallel to $\Sigma$, and by Thurston hyperbolization \cite{Thurston_Hyperbolization}, links on $\Sigma$ in these manifolds are not hyperbolic.
\end{remark}

To end this section, note that requiring a hyperbolic base link in Constructions \ref{const:mapping_torus} and \ref{const:doubled_thickened_surface} is not a strenuous or unreasonable assumption.

\begin{lemma}
    There exists a cellular fully augmented link $L_0$ such that $(\Sigma\times I)/\phi\setminus L_0$ is hyperbolic, where $\phi$ is as in Construction \ref{const:mapping_torus}. Similarly, there exist cellular fully augmented links $L_0$ and $L_0'$ such that $\Sigma\times S^1\setminus (L_0\cup L_0')$ as in Construction \ref{const:doubled_thickened_surface} is hyperbolic.
\end{lemma}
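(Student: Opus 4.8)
The plan is to exhibit one explicit cellular fully augmented link and then to verify hyperbolicity directly by Thurston's hyperbolisation theorem \cite{Thurston_Hyperbolization}, using the same tools as Proposition \ref{prop:hyperbolic_LuC} but without its standing assumption that the base link is already hyperbolic. Since ``there exists'' requires only one example in each manifold, we have a great deal of freedom in choosing the diagram.

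First I would construct the diagram. Since $\Sigma$ has genus at least two it carries arbitrarily complicated cellularly embedded $4$-valent graphs $G$ whose complementary regions are discs --- for instance the $1$-skeleton of a fine square decomposition subordinate to a pants decomposition of $\Sigma$ --- and $G$ may be chosen so that these regions admit a checkerboard colouring and so that $G$ is weakly prime. Regarding $G$ as the underlying $4$-valent graph of an alternating diagram, augmenting every crossing and deleting full twists produces a fully augmented link $L_0$ whose definitive projection $\pi(L_0)$ is cellular, weakly prime, and checkerboard-colourable. Its crossing circles form a finite collection of disjoint unknots sitting near points of $\Sigma$, so any prescribed essential simple closed curves $\gamma_{odd},\gamma_{even}$ --- chosen, in the mapping torus case, by applying Lemma \ref{lem:one_curve} to a curve moved nontrivially by $\phi$ --- can be isotoped off all of them, as Construction \ref{const:layered_curves} requires. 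For Construction \ref{const:doubled_thickened_surface} I would take $L_0'$ to be the mirror image of $L_0$, whose projection is again cellular, weakly prime, and checkerboard-colourable.

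Next comes hyperbolicity. Here $M$ is $\Sigma\times S^1$ or $(\Sigma\times I)/\phi$, both aspherical and hence irreducible, and one shows $M\setminus L_0$ (resp.\ $M\setminus(L_0\cup L_0')$) contains no essential sphere, disc, annulus or torus, running the argument of Proposition \ref{prop:hyperbolic_LuC} with the layered curves $\mathcal{C}_m$ absent. Irreducibility follows since a reducing sphere would bound a ball in $M$ containing part of a cellular link, which is impossible; boundary-irreducibility follows by trivially Dehn filling back to $M$ together with Lemma \ref{lem:FAL_restrict_ess_surface}. An essential annulus or torus disjoint from $L_0$, once its trivial curves of intersection with a fibre copy of $\Sigma$ are removed (using cellularity and irreducibility), is either vertical --- a copy of $c\times S^1$ for an essential $c\subset\Sigma$ when $M=\Sigma\times S^1$, or $c\times S^1$ for a $\phi$-periodic $c$ when $M=(\Sigma\times I)/\phi$, there being no horizontal tori since a genus $\geq 2$ surface is not finitely covered by a torus --- or else bounds a solid torus as in Lemmas \ref{lem:only_annulus_obvious} and \ref{lem:only_torus_obvious}; in the first case it meets $\Sigma$ in a curve essential in $\Sigma$ and hence meets $\pi(L_0)$, a contradiction, and in the second it is punctured by $L_0$ since a cellular augmented diagram does not lie in a solid torus. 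Essential annuli meeting $\partial N(L_0)$ are excluded by Lemma \ref{lem:FAL_restrict_ess_surface} exactly as in Proposition \ref{prop:hyperbolic_LuC}. As $M\setminus L_0$ also contains the incompressible fibre $\Sigma$ it is Haken, so Thurston hyperbolisation applies.

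A cleaner alternative for Construction \ref{const:doubled_thickened_surface} is to first show $\Sigma\times I\setminus N(L_0)$ is hyperbolic with totally geodesic boundary the two copies of $\Sigma$ --- the same annulus and torus analysis, now in the simpler ambient $\Sigma\times I$ governed by Lemmas \ref{lem:only_annulus_obvious}--\ref{lem:only_torus_obvious} --- and then observe that $\Sigma\times S^1\setminus(L_0\cup L_0')$ is its double along that totally geodesic boundary, hence hyperbolic by the reflection principle; one could equally cite the hyperbolicity results for doubled and generalised fully augmented links in \cite{doubledfullyaug,adams_generalize,kwon_tham_generalize}. I expect the main obstacle to be the atoroidality and anannularity of the closed-up complement: one must ensure that the gluing --- above all by a possibly reducible $\phi$, which may itself render $(\Sigma\times I)/\phi$ toroidal --- introduces no essential torus or annulus into $M\setminus L_0$. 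This is precisely what cellularity of $\pi(L_0)$ prevents, since any such surface must cross a fibre $\Sigma$ in an essential curve and is therefore punctured by $L_0$; choosing $\pi(L_0)$ cellular (and $G$ large enough to avoid the finitely many Seifert-fibered degeneracies) is exactly the input Thurston hyperbolisation needs.
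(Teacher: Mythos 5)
Your overall strategy is reasonable, and your ``cleaner alternative'' paragraph is essentially the paper's actual proof: the paper does not verify hyperbolicity of the base link from scratch, but cites \cite{general_angled_chunks}, \cite{adams_generalize} and \cite{doubledfullyaug} for the existence of hyperbolic cellular fully augmented links in the thickened surface $\Sigma\times I$, obtains the doubled case by observing $\Sigma\times S^1\setminus(L_0\cup L_0')$ is a doubled manifold, and handles the mapping torus by a cut-along-the-fibre argument: any essential disc, sphere, annulus or torus in $(\Sigma\times I)/\phi\setminus L_0$ either can be isotoped off the fibre $\Sigma_\phi$, or is cut by $\Sigma_\phi$ into essential annuli in $(\Sigma\times I)\setminus L_0$ with boundary on $\bdy(\Sigma\times I)$; both outcomes contradict hyperbolicity of $(\Sigma\times I)\setminus L_0$. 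This reduction works uniformly for every $\phi$ --- pseudo-Anosov, reducible or periodic --- and never requires classifying essential annuli or tori in the closed-up manifold.

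Your primary route, verifying hyperbolicity of $M\setminus L_0$ directly in the closed manifold, has a genuine gap at its key step. You argue that a vertical essential annulus or torus ``meets $\Sigma$ in a curve essential in $\Sigma$ and hence meets $\pi(L_0)$, a contradiction.'' But meeting the projection $\pi(L_0)$ as curves on $\Sigma$ does not imply meeting the link $L_0$ in the $3$-manifold: $L_0$ lies in a neighbourhood $\Sigma\times(-\epsilon,\epsilon)$, and a surface in the link complement that is merely isotopic in $M$ to a vertical one can weave over and under the strands of $L_0$ while still crossing $\Sigma$ in essential curves. The inference is valid only once the surface has been isotoped, \emph{within the link complement}, to be genuinely vertical near the projection surface; arranging that is precisely the normal-position argument (relative to the projection surface and the crossing discs) supplied by the cited hyperbolicity results, and it is also where weak primeness, not only cellularity, enters. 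A related looseness affects your claim that a torus bounding a solid torus ``is punctured by $L_0$ since a cellular augmented diagram does not lie in a solid torus'': the solid torus need only contain part of $L_0$. Your fallback of citing \cite{doubledfullyaug,adams_generalize} closes the gap for the thickened-surface and doubled cases, but for the mapping torus you would still need the paper's cutting argument (or an equivalent) rather than a classification of vertical tori by $\phi$-periodic curves; in particular the remark about choosing $G$ large enough to ``avoid the finitely many Seifert-fibered degeneracies'' does not correspond to a precise step.
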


    \begin{proof}
        A set of sufficient conditions for hyperbolicity of fully augmented links with cellular projections to incompressible surfaces is given in Reid \cite{general_angled_chunks}, along with hyperbolicity for much more general fully augmented links. The base links of Construction \ref{const:layered_curves} --- that is, fully augmented links in thickened surfaces, also known as virtual fully augmented links --- can also be proven to be hyperbolic by the techniques of \cite{adams_generalize} and \cite{doubledfullyaug}. Further, by \cite{doubledfullyaug}, hyperbolicity of the base links of Construction \ref{const:doubled_thickened_surface} follows on observing $(\Sigma\times S^1)\setminus(L_0\cup L_0')$ is a doubled manifold; for example, this is immediate in the case $L_0$ and $L_0'$ are fully augmented links without half twists.

        To show there exist hyperbolic examples of the base links of Construction \ref{const:mapping_torus}, start with hyperbolic $(\Sigma\times I)\setminus L_0$ and denote the image of $\bdy(\Sigma\times I)$ in $(\Sigma\times I)/\phi$ by $\Sigma_\phi$. The argument follows similarly to Proposition \ref{prop:hyperbolic_LuC}, using Thurston hyperbolization.
        
        Suppose there exists an essential disc $D\subset(\Sigma\times I)/\phi\setminus L_0$. Then $\Sigma_\phi\cap D$ is composed of trivial curves on $D$; as $\Sigma_\phi$ is incompressible in $(\Sigma\times I)/\phi$, each component of $\Sigma_\phi\cap D$ also bounds a disc in $\Sigma_\phi$. As each $M$ is irreducible, $D$ co-bounds a ball and the innermost such $D$ is removable by isotopy of $D$. After finitely many such isotopies, $\Sigma_\phi\cap D$ is empty, i.e.~$D$ is an essential disc in $\Sigma\times I\setminus L_0$, contradicting the hyperbolicity of $(\Sigma\times I)\setminus L_0$. By identical argument, if there exists an essential sphere $S\subset(\Sigma\times I)/\phi$, all components of $\Sigma_\phi\cap S$ are removable by isotopy of $S$, implying $S$ is essential in $(\Sigma\times I)\setminus L_0$, which is a contradiction.

        Suppose there exists an essential annulus $A\subset(\Sigma\times I)/\phi\setminus L_0$. If $\Sigma_\phi\cap A$ is entirely removable by isotopy of $A$, then again the hyperbolicity of $(\Sigma\times I)/\phi\setminus L_0$ is contradicted; thus $\Sigma_\phi\cap A$ is composed of non-removable curves that are nontrivial in both $A$ and $\Sigma_\phi$. Cutting the manifold along $\Sigma_\phi$ then cuts $A$ into essential annuli in $(\Sigma\times I)/\phi\setminus L_0$ with boundary components on $\bdy(\Sigma\times I)$, which is again a contradiction. By identical argument, an essential torus $T\subset(\Sigma\times I)/\phi\setminus L_0$ is either isotopic to an essential torus disjoint from $\Sigma_\phi$ or is cut along $\Sigma_\phi$ into essential annuli in $(\Sigma\times I)\setminus L_0$, each of which is again a contradiction.

        As $(\Sigma\times I)/\phi\setminus L_0$ contains no essential discs, spheres, annuli, or tori, it is hyperbolic; thus there exist hyperbolic base links obtained from Construction \ref{const:mapping_torus}. Following an identical argument for Construction \ref{const:doubled_thickened_surface} starting with hyperbolic $(\Sigma\times I)\setminus L_0$ and $(\Sigma\times I)\setminus L_0'$ provides an alternate proof of the existence of hyperbolic base links for Construction \ref{const:doubled_thickened_surface}.
    \end{proof}

\subsection{Main Results}\label{sec:main_result}

To obtain the desired result, we employ annular Dehn filling. 

\begin{definition}\label{def:annular_Dehn_filling}
    Given two link components that co-bound an annulus $A$ in a manifold $M$, an \emph{annular Dehn filling} or $1/t$-\emph{annular Dehn filling} consists of performing a $1/t$ Dehn filling on one link component and a $-1/t$ Dehn filling on the other for some $t\in \NN$. The effect is of a Dehn twist through a neighbourhood of $A$; i.e. for a transverse curve $\gamma$ through $A$, $1/t$-annular filling of $A$ acts as the identity outside a neighbourhood of $A$ and spins the curve $\gamma$ a total of $t$ times about the core of $A$.
\end{definition}

In particular, annular filling that intersects a surface in a nontrivial curve acts as a Dehn twist on that curve in the surface.

Given the family of links $\{L_m\}_{m\in\NN}$ in $\Sigma\times S^1$ or $(\Sigma\times I)/\phi$ as in Constructions \ref{const:doubled_thickened_surface} and \ref{const:mapping_torus}, for each $i\in\{1,...m\}$, select an integer $t_i>0$. Perform $1/t_i$ annular Dehn filling along each $A_i$; by Definition \ref{def:annular_Dehn_filling}, this has the effect of $t_i$ full twists on $L$ along $\gamma_{odd}$ or $\gamma_{even}$ as appropriate. All choices of $m$ and $t_i$ give a countable collection of links $\{J_n\}_{n\in\NN}$ in $M$. In general, there is such a family for any appropriate choice of $L$ and either $L'$ or $\phi$ (depending on the construction) with compatible choices of $\gamma_{odd}$ and $\gamma_{even}$ on $\Sigma$.

\begin{theorem}\label{thm:infinite_volume}
Let $\Sigma$ be a closed surface of genus at least two. Let $M$ be either the doubled thickened surface $\Sigma\times S^1$ or the mapping torus $(\Sigma\times I)/\phi$ of a map $\phi$ that acts nontrivially on the isotopy class of at least one essential curve in $\Sigma$. There exist families of hyperbolic fully augmented links $\{J_n\}_{n\in\NN}$ such that:
    \begin{itemize}
        \item each $J_j\in\{J_n\}_{n\in\NN}$ projects to an incompressible embedding of $\Sigma$ in $M$,
        \item the number of crossing circles is a fixed natural number $c$ for all $J_j\in\{J_n\}_{n\in\NN}$, and
        \item the volume of the complement $\Vol(M\setminus J_n)$ approaches infinity as $n$ approaches infinity.
    \end{itemize}
\end{theorem}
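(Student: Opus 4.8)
The plan is to realise the family $\{J_n\}$ as Dehn fillings of a sequence of cusped hyperbolic manifolds whose volumes diverge, following the reinterpretation of the Kalfagianni--Purcell construction described at the start of Section~\ref{sec:constructions}. Begin with a hyperbolic fully augmented link $L_0$ (and, for Construction~\ref{const:doubled_thickened_surface}, a second such link $L_0'$) with cellular projection to $\Sigma$; these exist by the final lemma of Section~\ref{sec:hyperbolicity}. Fix essential curves $\gamma_{odd},\gamma_{even}$ on $\Sigma$ with $\iota(\gamma_{odd},\gamma_{even})\geq 1$, isotoped off every crossing circle of $L_0$ (for Construction~\ref{const:mapping_torus}, chosen via Lemma~\ref{lem:one_curve} so that $\phi$ moves both of their isotopy classes). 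For each $m$ let $L_m$ be the link carrying $m$ layers of these curves together with the base link(s), from Construction~\ref{const:doubled_thickened_surface} or~\ref{const:mapping_torus}; by Proposition~\ref{prop:hyperbolic_LuC} every $M\setminus L_m$ is hyperbolic. The link $J_n$ will be the result of a $1/t_i$ annular Dehn filling along each annulus $A_i$ for a suitable choice of $m$ and of deep filling coefficients $t_1,\dots,t_m$.

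The heart of the proof --- and the step I expect to be the main obstacle --- is the lower bound $\Vol(M\setminus L_m)\to\infty$ as $m\to\infty$. This is the geometric input of Kalfagianni--Purcell transplanted to our setting: the layered curves $\mathcal{C}_m$, alternating between the two transverse isotopy classes and nested on either side of $\Sigma$, form a pattern analogous to a fully augmented link with on the order of $m$ crossing circles, for which the standard polyhedral and essential-surface volume estimates force $\Vol(M\setminus L_m)$ to grow at least linearly in $m$. The combinatorial pattern of the layered curves --- and hence this estimate --- is exactly what is preserved when one replaces Kalfagianni--Purcell's curves bounding compression discs of a Heegaard surface in $S^3$ by curves essential in the incompressible surface $\Sigma$; the hypotheses $g\geq 2$ and $\iota(\gamma_{odd},\gamma_{even})\geq 1$ (respectively $\phi(\gamma)\not\sim\pm\gamma$) are precisely what keep the configuration hyperbolic and genuinely growing in complexity with $m$, as opposed to the degenerate low-genus behaviour noted after Proposition~\ref{prop:hyperbolic_LuC}.

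Granting this, fix $m$ large. By Thurston's hyperbolic Dehn surgery theorem, all but finitely many choices of $(t_1,\dots,t_m)$ yield a hyperbolic filling, and as $\min_i t_i\to\infty$ its volume converges to $\Vol(M\setminus L_m)$ from below; choose the $t_i$ deep enough that this volume exceeds $\Vol(M\setminus L_m)-1$, and let $J_m$ denote the resulting link. By Definition~\ref{def:annular_Dehn_filling} the annular (paired $\pm 1/t_i$) fillings act as Dehn twists along $\gamma_{odd}$ and $\gamma_{even}$ and return the ambient manifold $M$; since these curves avoid every crossing circle of $L_0$ (and, after a further small isotopy, the traces of its crossing discs in the projection surface), the $c$ crossing circles and their crossing discs are untouched, so the twisted projection components still meet each crossing disc exactly twice and still lie in a copy of $\Sigma$. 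Hence each $J_m$ is a fully augmented link with the fixed number $c$ of crossing circles, projecting to a copy of $\Sigma$ that is incompressible in $M$ because it is a fibre of the fibration $M\to S^1$ in both constructions, and it is hyperbolic by the choice above. The one point needing care here is that the annular fillings genuinely alter $M\setminus L_0$ rather than returning a homeomorphic complement; this is where, in Construction~\ref{const:mapping_torus}, the condition $\phi(\gamma)\not\sim\pm\gamma$ obstructs undoing the twist, and in Construction~\ref{const:doubled_thickened_surface} the second base link $L_0'$ plays the same role.

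Letting $m\to\infty$ and reindexing gives the desired sequence $\{J_n\}$ of hyperbolic fully augmented links, each with $c$ crossing circles, each projecting to an incompressible copy of $\Sigma$, with $\Vol(M\setminus J_n)>\Vol(M\setminus L_m)-1\to\infty$. For Construction~\ref{const:mapping_torus}, Lemma~\ref{lem:one_curve} and the following remark guarantee that a suitable pair $\gamma_{odd},\gamma_{even}$ exists whenever $\phi$ acts nontrivially on some essential isotopy class --- exactly the hypothesis of the theorem --- so the construction applies in all the stated cases. Apart from the volume lower bound of the second paragraph, every step is an application of Proposition~\ref{prop:hyperbolic_LuC}, Thurston's Dehn surgery theorem, and the bookkeeping of Definitions~\ref{def:FAL} and~\ref{def:annular_Dehn_filling}.
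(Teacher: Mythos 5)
Your proposal has the same skeleton as the paper's proof: build $L_m$ via the layered-curve constructions, invoke Proposition \ref{prop:hyperbolic_LuC} for hyperbolicity, show $\Vol(M\setminus L_m)\to\infty$, and then perform deep annular Dehn fillings so that the volume of $M\setminus J_n$ stays close to $\Vol(M\setminus L_m)$ from below (Jorgensen--Thurston), while the crossing circles and crossing discs are untouched because $\gamma_{odd},\gamma_{even}$ avoid them. The bookkeeping about incompressibility of the fibre, the fixed count $c$, and the role of Lemma \ref{lem:one_curve} all matches.

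However, the step you yourself flag as ``the main obstacle'' --- the divergence of $\Vol(M\setminus L_m)$ --- is left as a heuristic rather than proved. Your sketch appeals to the layered curves being ``analogous to a fully augmented link with on the order of $m$ crossing circles'' and to unspecified ``polyhedral and essential-surface volume estimates,'' but $\mathcal{C}_m$ is not a set of crossing circles of any fully augmented link, so none of that machinery applies directly, and as written this is a genuine gap. The paper closes it in one line: once Proposition \ref{prop:hyperbolic_LuC} gives that $M\setminus L_m$ is hyperbolic, it is a cusped hyperbolic manifold with at least $l+c+2m$ cusps, and Adams's lower bound of $v_{tet}$ per cusp for $n$-cusped hyperbolic $3$-manifolds immediately gives $\Vol(M\setminus L_m) > 2m\,v_{tet}$, which exceeds any prescribed $V$ for $m$ large. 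No geometric analysis of the layered-curve pattern is needed beyond counting its $2m$ components as cusps. With that substitution your argument goes through; the remaining concern you raise about the annular fillings ``genuinely altering'' the complement is not needed for the statement, since the conclusion only requires volumes exceeding every $V$, which the Adams bound plus the Dehn-filling volume estimate already delivers.
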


    \begin{proof}
    Fix $V>0$. Fix an integer $m>0$ such that $2mv_{tet}>V$, where $v_{tet}$ is the volume of a regular ideal hyperbolic tetrahedron. The manifold $M\setminus L_m$, as defined above, is hyperbolic by Proposition \ref{prop:hyperbolic_LuC}, and has either $l+c+2m$ or $l'+c'+l+c+2m$ cusps when $M$ is the mapping torus or doubled thickened surface, respectively. By Adams \cite{n-cusp_volume}, the volume satisfies:
    \[\Vol(M\setminus L_m)>(l+c+2m)v_{tet}>2mv_{tet}>V,\]
    when $M=(\Sigma\times I)/\phi$, and a similar relation holds when $M=\Sigma\times S^1$. 

    Consider $J_j\in \{J_n\}_{n\in\NN}$ obtained by annular Dehn filling the $2m$ cusps $C_1,C_{-1},...C_m,C_{-m}\subset L_m$. For $1,...,m$, let $t_i$ be such that $J_j$ is obtained by $1/t_i$ annular Dehn filling $A_i, i\in\{1,...,m\}$. As the Dehn filling coefficients $t_i$ approach infinity, the volume of $M\setminus J_j$ approaches $\Vol(M\setminus L_m)$ from below, since Dehn filling is known to decrease volume from work of Jorgensen and Thurston \cite{Thurston_Hyperbolization}; see also work of Futer, Kalfagianni, and Purcell \cite{volume_bound}. Thus, for infinitely many $J_j\in\{J_n\}_{n\in\NN}$, the volume of the complement in $M$ is strictly greater than $2mv_{tet}>V$ for arbitrary positive $V$.

    By construction, $\gamma_{odd}$ and $\gamma_{even}$ have no intersection with crossing circles, and Dehn twists on $\Sigma$ introduce no new crossings on $\Sigma$. Thus the number of crossing circles is a constant $c$ for all $J_j$.
    \end{proof}

We may relate this result to weakly generalised alternating links as below.

\begin{corollary}\label{cor:WGA_infinite_volume}
Let $\Sigma$ be a closed surface of genus at least two. There exist manifolds $M$ and families of weakly generalised alternating knots and/or links $\{K_n\}_{n\in\NN}$ such that:
    \begin{itemize}
        \item each $K_j\in\{K_n\}_{n\in\NN}$ projects to an incompressible embedding of $\Sigma$ in $M$,
        \item the twist number is a fixed natural number $c$ for all $K_j\in\{K_n\}_{n\in\NN}$, and
        \item the volume of the complement $\Vol(M\setminus K_n)$ approaches infinity as $n$ approaches infinity.
    \end{itemize}
\end{corollary}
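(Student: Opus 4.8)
The plan is to combine Theorem~\ref{thm:infinite_volume} with Theorem~\ref{thm:augment_to_WGA}, using the fact that $1/t$ Dehn filling on a crossing circle changes hyperbolic volume only by a bounded amount (indeed decreases it, by Jørgensen--Thurston) while converting the fully augmented link into a weakly generalised alternating link. Concretely, start with the family $\{J_n\}_{n\in\NN}$ of hyperbolic fully augmented links in $M$ produced by Theorem~\ref{thm:infinite_volume}, where each $J_n$ has the same number $c$ of crossing circles, each projects to an incompressible copy of $\Sigma$, and $\Vol(M\setminus J_n)\to\infty$. We may assume the base link $L$ was chosen so that its projection $\pi(L)$ is cellular and checkerboard-colourable on $\Sigma$; since annular Dehn filling along the $A_i$ acts by Dehn twists on $\Sigma$, which introduce no new crossings and preserve both cellularity and checkerboard-colourability, every $J_n$ still has a cellular, weakly prime (by Lemma~\ref{lem:weakly-prime}), checkerboard-colourable projection to the incompressible surface $\Sigma\subset M$.

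Next, apply Theorem~\ref{thm:augment_to_WGA} to each $J_n$ separately: there is a choice of nonzero integers $t^{(n)}_1,\dots,t^{(n)}_c$ (one per crossing circle of $J_n$) such that performing $1/t^{(n)}_k$ Dehn filling on the $k$-th crossing circle yields a weakly generalised alternating link $K_n$ in $M$, projecting to the same incompressible $\Sigma$. The twist number of $K_n$ is then at most $c$, since each filled crossing circle becomes a single twist region and no other twist regions are created; in fact one can arrange it to equal $c$ (or a fixed constant) by choosing the base diagram appropriately. This already gives the first two bullet points. For the third, observe that $M\setminus K_n$ is obtained from the hyperbolic manifold $M\setminus J_n$ by Dehn filling $c$ cusps, so by Jørgensen--Thurston $\Vol(M\setminus K_n)$ is at most $\Vol(M\setminus J_n)$; but we need a \emph{lower} bound on $\Vol(M\setminus K_n)$ that still tends to infinity.

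To get the lower bound, I would avoid chasing the $J_n$ through filling and instead run the volume estimate directly on the manifold $M\setminus L_{m_n}$ (the base link with $m_n$ layered curves, before any annular filling), exactly as in the proof of Theorem~\ref{thm:infinite_volume}: by Adams' $n$-cusp volume bound \cite{n-cusp_volume}, $\Vol(M\setminus L_{m_n}) > 2m_n v_{tet}$. Now $M\setminus K_n$ is obtained from $M\setminus L_{m_n}$ by Dehn filling \emph{all} of its cusps except the $2m_n$ cusps coming from the layered curves $\mathcal{C}_{m_n}$ — the crossing-circle cusps get $1/t^{(n)}_k$ fillings (producing the WGA twist regions) and, if needed, the projection-component cusps get trivial fillings. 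Since Dehn filling only decreases volume, one cannot directly conclude $\Vol(M\setminus K_n)$ is large this way. The clean fix is to reverse the order of operations: first produce $K_n$ from the cellular checkerboard base link $L$ with layered curves $\mathcal{C}_{m_n}$ still \emph{drilled}, i.e.\ apply Theorem~\ref{thm:augment_to_WGA} to $L_{m_n}$ to get a WGA link $\widehat{K}_{m_n}$ whose complement still contains the $2m_n$ layered-curve cusps unfilled, so $\Vol(M\setminus \widehat{K}_{m_n})\geq\Vol$ of the unfilled pieces; more precisely, $M\setminus \widehat{K}_{m_n}$ drilled along $\mathcal{C}_{m_n}$ \emph{is} $M\setminus L_{m_n}$ with the crossing circles filled, which by Adams still has at least $2m_n$ cusps and hence volume $> 2m_n v_{tet}$, and then $\Vol(M\setminus\widehat K_{m_n})$ is bounded below by — wait, that is the wrong direction again. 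The correct statement, and the one I would actually use, is: let $K_n$ be obtained by then performing $1/s^{(n)}_i$ annular Dehn filling along each $A_i$ in $M\setminus\widehat K_{m_n}$; by Jørgensen--Thurston $\Vol(M\setminus K_n)\nearrow\Vol(M\setminus\widehat K_{m_n}) \geq$ (a quantity exceeding $2m_n v_{tet}$, since $M\setminus\widehat K_{m_n}$ is $M\setminus L_{m_n}$ with only the $c$ crossing circles filled, hence still has $\geq 2m_n$ cusps and volume $>2m_n v_{tet}$ by Adams). Choosing the annular filling coefficients $s^{(n)}_i$ large enough, $\Vol(M\setminus K_n)$ exceeds $2m_n v_{tet} - 1 \to\infty$, while the twist number stays fixed at $c$ because annular filling acts by Dehn twists on $\Sigma$ away from all crossing regions. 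The main obstacle is precisely this bookkeeping — keeping the volume lower bound pointing the right way requires doing the WGA-conversion (Theorem~\ref{thm:augment_to_WGA}) \emph{before} the volume-inflating annular fillings, and invoking Adams' bound on the intermediate manifold $M\setminus L_{m_n}$ with its crossing circles already $1/t_k$-filled (still $\geq 2m_n$ cusps); everything else is a direct citation of Theorem~\ref{thm:infinite_volume}, Theorem~\ref{thm:augment_to_WGA}, and standard Dehn-filling monotonicity.
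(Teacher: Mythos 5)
After the false starts, your final argument is exactly the paper's: perform the crossing-circle fillings of Theorem \ref{thm:augment_to_WGA} while the $2m$ layered curves $\mathcal{C}_m$ are still drilled, apply Adams' cusp-count bound to that intermediate manifold to get volume $>2mv_{tet}$, and only then recover the volume via large annular Dehn fillings --- you correctly identified that doing the WGA conversion after the annular fillings would point the volume inequality the wrong way. This matches the paper's proof essentially verbatim (including its level of informality about the hyperbolicity of the intermediate manifold), so the proposal is correct and takes the same approach.
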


    \begin{proof}
    By Theorem \ref{thm:augment_to_WGA}, each hyperbolic fully augmented link $L$ with cellular projection $\pi(L)$ whose complementary regions are checkerboard-colourable on a surface $\Sigma$ is associated to a weakly generalised alternating link $K$ via $1/t_k$ Dehn filling of crossing circles. Perform these Dehn fillings on the crossing circles of appropriately chosen $L\subset M$. Suppose the resulting link is hyperbolic; again by \cite{volume_bound}, the volume of $M \setminus K$ is bounded above by the volume of $M\setminus L$ as each $t_k\longrightarrow\infty$. Add $C_1,C_{-1},...,C_m,C_{-m}$ as above, and again $1/t_i$ annular Dehn fill each $A_i$ to obtain a countable family of weakly generalised alternating links $\{K_n\}_{n\in\NN}$. By identical argument to Theorem \ref{thm:infinite_volume}, $\Vol(M \setminus K_n)\longrightarrow\infty$ as $n$ approaches infinity, and twist number remains constant, as no new crossings are introduced by annular Dehn filling.
    \end{proof}

\subsection{Bounded Case and Open Questions}

The complement of a hyperbolic fully augmented link in the trivial mapping torus (i.e.~Construction \ref{const:mapping_torus} where $\phi$ is the identity and $m=0$) \emph{does} admit a linear upper bound on hyperbolic volume. To obtain this bound, we use a natural decomposition of fully augmented link projections.

\begin{proposition}\label{prop:decomp}
    Let $L$ be a fully augmented link with a projection $\pi(L)$ onto a (possibly disconnected) closed, orientable surface $\Sigma$ embedded in a manifold $M$. There exists a decomposition of the link complement $M\setminus L$ into a collection of manifolds with boundary where each such manifold is isotopic to a component of $M\cut\Sigma=M\setminus N(\Sigma)$, where the boundary components are decorated with an ideal checkerboard-colourable graph that depends on $\pi(L)$.
\end{proposition}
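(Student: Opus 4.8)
The plan is to adapt to the surface setting the standard ideal polyhedral decomposition of fully augmented link complements in $S^3$ (Purcell \cite{intro_aug_links}, Futer--Kalfagianni--Purcell \cite{volume_bound}; see also \cite{canon_poly_decomp}), organised through the chunk decomposition of Howie--Purcell \cite{alt_links_surfaces(chunk_decomp)}. First I would fix the cutting surface. Let $D_1,\dots,D_c$ be the crossing discs of $L$, chosen so that each $D_i$ meets $\Sigma$ in a single arc $\alpha_i$ carrying the two projection strands through $D_i$, and so that $\bigcup_i D_i$ meets $\Sigma$ exactly in $\bigcup_i\alpha_i$. If $\pi(L)$ has a half-twist at $D_i$, I would first isotope that crossing into a small ball meeting $\Sigma$ near $\alpha_i$, so that the cut surface is unaffected and the half-twist survives only as a local modification of the decorating graph, exactly as for planar augmented links \cite{intro_aug_links}. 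The cutting surface is then $S=(\Sigma\cup\bigcup_i D_i)\setminus N(L)$, a properly embedded (cornered) surface in $M\setminus N(L)$.

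Next I would cut $M\setminus L$ along $S$ and identify the pieces. Cutting $M$ along $\Sigma$ yields precisely $M\cut\Sigma$, so the claim reduces to showing that the further cuts along the crossing discs, together with the removal of $N(L)$, do not change the homeomorphism type of any piece. After the $\Sigma$-cut each crossing disc $D_i$ becomes two half-discs $D_i^{\pm}$, each properly embedded in a component of $M\cut\Sigma$ with boundary a bigon, one arc on a copy of $\Sigma$ and one a meridian-type arc on $\partial N(C_i)$; such a half-disc is boundary-parallel, so cutting along it only re-creases the boundary, and since the projection components lie in $\Sigma$ away from the half-twist balls, deleting $N(\text{projection components})$ merely trims the boundary near the $\Sigma$-copies. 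Hence each piece of the resulting decomposition of $M\setminus L$ is homeomorphic, and, reversing the cuts, isotopic in $M$, to a component of $M\cut\Sigma$, with all of the link data recorded as decoration on the boundary; the pieces reglue along the $\Sigma$-copies and the crossing-disc faces to recover $M\setminus L$.

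Then I would describe the decoration. On the boundary of each piece, the copies of $\Sigma$ carry the complementary regions of $\pi(L)$ as ``shaded'' faces, each half-disc $D_i^{\pm}$ is a ``white'' face, the sub-arcs of projection components between consecutive crossing discs are ideal edges, and the points $C_i\cap\Sigma$ are ideal vertices lying on the crossing-circle cusps (with incident white faces joined at an ideal vertex where there is a half-twist, as usual). Every edge of this graph separates a shaded face from a white face, so the two-colouring of faces is proper; this is exactly the assertion that the decorating graph is ideal and checkerboard-colourable, and it is manifestly determined by $\pi(L)$.

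The step I expect to be the main obstacle is the homeomorphism-type claim in the second paragraph: verifying rigorously that cutting along the half-crossing-discs and removing $N(L)$ genuinely leaves the interior of each piece unchanged and only re-decorates its boundary. This needs a careful local model in $M\setminus N(L)$ near each crossing circle, including the half-twist case, and extra bookkeeping when $\Sigma$ is disconnected or non-separating, so that a single component of $M\cut\Sigma$ may carry two copies of $\Sigma$ and be reglued to itself.
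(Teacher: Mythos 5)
Your proposal is correct and follows essentially the same route as the paper: cut along $\Sigma$ together with the bisected crossing discs, note that the resulting half-discs are boundary-parallel so each piece is a component of $M\cut\Sigma$, and record the link data as an ideal checkerboard-coloured graph on the boundary, with half-twists handled by a local modification (the paper removes them first and recovers them by regluing shaded faces across components, which is equivalent to your small-ball isotopy). One minor point: your colour convention is reversed relative to the paper's --- there the faces coming from $\Sigma$ are the \emph{white} faces and the crossing-disc faces are \emph{shaded} --- which is immaterial here but matters for the face count in Lemma \ref{lem:no._white_faces}.
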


    \begin{proof}
    This decomposition is analogous to a standard decomposition for fully augmented links which appears in, for example, \cite{intro_aug_links}, but this decomposition allows generalised projection surfaces.
    
    The steps to decompose a fully augmented link projection $\pi(L)$ are as follows:
        \begin{enumerate}
            \item Remove all half-twists from $\pi(L)$.
            \item Cut along $\Sigma$, bisecting each crossing circle and the twice-punctured disc it bounds.
            \item Cut along each half of each twice-punctured disc.
            \item Flatten each of the twice-punctured disc halves to the boundary.
            \item Collapse each link component to an ideal vertex.
        \end{enumerate}
    These steps are shown for a single crossing circle in Figure \ref{fig:bowtie_decomposition}.

        \begin{figure}[h]
            \begin{center}
            \import{Figures}{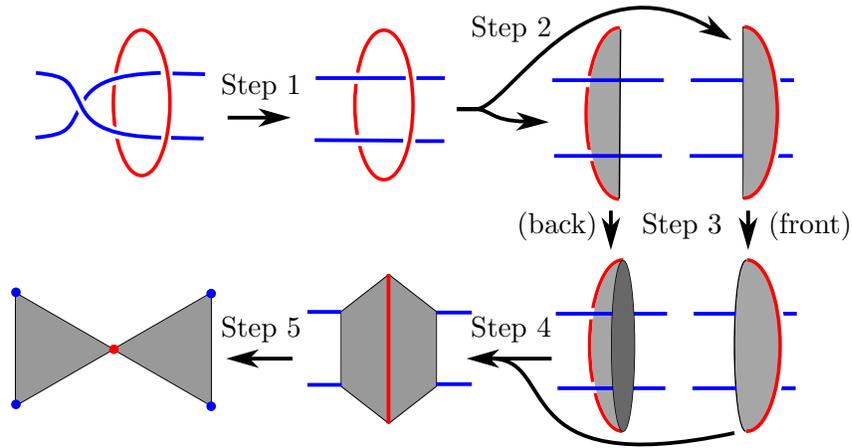}
            \end{center}
            \caption{The steps of the bowtie decomposition of a fully augmented link with projection $\pi(L)$ on a surface $\Sigma$, displayed for a single crossing circle. Though the resulting image is identical for both components, note it describes two separate objects with interiors on opposite sides of the page.}
            \label{fig:bowtie_decomposition}
        \end{figure}
    
    In this process, the components of the projection surface become the white faces and the twice punctured discs become the shaded faces. All vertices of this decomposition are ideal. Each boundary arising from $\Sigma$ is decorated with such a graph.

    The manifold may be recovered from the above decomposition by gluing the faces in a way that reverses the decomposition; namely, gluing white faces that arose from the same surface by the identity and folding shaded faces to form crossing circles. Half-twists at crossings are recovered at this step by gluing shaded faces between components instead of within a single component; see Figure \ref{fig:half_twist_glue}.
    \end{proof}

\begin{figure}[h]
            \begin{center}
            \includegraphics[]{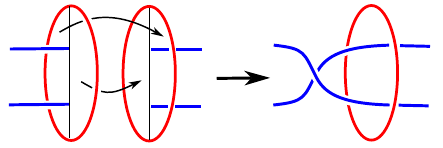}
            \end{center}
            \caption{Gluing the shaded faces of the bowtie decomposition to form a half-twist at a crossing circle, reversing steps 1 and 3.}
            \label{fig:half_twist_glue}
        \end{figure}

\begin{definition}\label{def:bowtie_decomp}
    The decomposition of the complement of a fully augmented link described in proposition \ref{prop:decomp} is the \emph{bowtie decomposition} of $M\setminus L$, after the distinctive shapes formed by the shaded faces. In full specificity, it is referred to as the bowtie decomposition of $\pi(L)$ on $\Sigma$ in $M$, but these are often omitted where the manifold and projection surface are clear.
\end{definition}

\begin{definition}
    Given a cellular fully augmented link $L$ with projection $\pi(L)$ to a surface $\Sigma$, apply the bowtie decomposition. The \emph{nerve} of $\pi(L)$, or of $L$ when the projection surface is clear, is the graph on $\Sigma$ which has a vertex for each white face and an edge between vertices wherever the associated white face(s) meet at an ideal vertex of the decomposition.
\end{definition}

\begin{lemma}\label{lem:no._white_faces}
    Let $L$ be a fully augmented link with $c$ crossing circles. The bowtie decomposition of a projection $\pi(L)$ that is cellular in a surface $\Sigma$ of genus $g$ has $c+2-2g$ white faces.
\end{lemma}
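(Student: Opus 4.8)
The plan is to read the white faces off an explicit cellulation of $\Sigma$ and then apply Euler's formula. First I would recall, from the description of the bowtie decomposition in Proposition \ref{prop:decomp}, that after step~(1) removes all half-twists the projection components lie flat in $\Sigma$ and are pairwise disjoint, and that the white faces are exactly the complementary regions of $\pi(L)$ in $\Sigma$ once, for each crossing circle $C_k$, the image arc $\alpha_k = \pi(C_k)\subset\Sigma$ has been collapsed to a point $p_k$ (this records the effect of bisecting the crossing disc and flattening the two halves onto $\Sigma$ as shaded faces). Write $\Gamma$ for the resulting graph on $\Sigma$: its edges are the arcs into which the flattened projection components are cut, and its vertices are the points $p_k$. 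The white faces of the decomposition are then precisely the faces of $\Gamma$, so it suffices to count these.

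Next I would verify that $\Gamma$ has exactly $c$ vertices, all of valence $4$. By Definition \ref{def:FAL} the projection components meet each crossing disc in exactly two points; collapsing $\alpha_k$ therefore forces the two strands crossing it to pass through the single point $p_k$, so $p_k$ is a transverse double point and has valence $4$. By the observation following Definition \ref{def:FAL}, cellularity forces every projection component to meet at least one crossing circle, so none survives as an edge-free circle; hence $\Gamma$ has vertex set exactly $\{p_1,\dots,p_c\}$, i.e.\ $V=c$. Since every vertex has valence $4$ and there are $c$ of them, $2E = 4c$, so $E = 2c$. Because $\pi(L)$ is cellular every complementary region is a disc, so $\Gamma$ is a CW decomposition of $\Sigma$ and $V - E + F = \chi(\Sigma) = 2-2g$; substituting $V=c$ and $E=2c$ gives $F = c + 2 - 2g$, the asserted number of white faces.

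The step requiring the most care is the first one: pinning down the bijection between white faces and faces of $\Gamma$. In particular one must check that collapsing each crossing disc to a point (rather than keeping $\alpha_k$ as an arc with two free "whisker" sub-arcs running out to where the crossing circle meets $\Sigma$) leaves the region count unchanged, and that no stray embedded circle of projection components remains unaccounted for. Both are immediate consequences of cellularity together with the remark after Definition \ref{def:FAL}; granting them, the Euler-characteristic computation is a single line. As a sanity check, for a fully augmented link in $S^3$ ($g=0$) this gives $c+2$ white faces per side of the projection sphere, and for the genus-one case $c$ white faces, both consistent with the standard pictures.
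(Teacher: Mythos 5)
Your count is correct, and it is the same underlying idea as the paper's --- an Euler characteristic computation on a cell structure of $\Sigma$ induced by the diagram --- but run on a different graph. The paper works with the \emph{nerve} of $\pi(L)$, in which the white faces are the \emph{vertices}: it counts the nerve's faces ($F=2c$, one per shaded triangle) and edges ($E=3c$, since each face of the nerve is a triangle) and solves $V-E+F=2-2g$ for $V$. You instead use the primal $4$-valent projection graph $\Gamma$ obtained by collapsing each crossing arc to a point, in which the white faces are the \emph{faces}, and you count $V=c$ and $E=2c$ directly. The two computations are essentially dual and need slightly different inputs: the paper's needs to know that each crossing circle contributes exactly two triangular shaded faces, while yours needs that each crossing disc is punctured exactly twice (Definition \ref{def:FAL}) and that cellularity rules out projection components disjoint from every crossing circle --- a point you correctly flag, since an edge-free circle would invalidate both the CW structure and the handshake count $2E=4c$. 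The identification you single out as delicate, namely that the white faces of the bowtie decomposition are precisely the complementary regions of $\Gamma$, is the same identification the paper makes implicitly in defining the nerve, so neither route avoids it; your sanity checks ($c+2$ white faces per polyhedron in the planar case, e.g.\ the four white faces of each octahedron for the Borromean rings with $c=2$) confirm the normalization is right.
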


    \begin{proof}
        Proceed by argument of Euler characteristic.
        
        Each face of the nerve of $\pi(L)$ corresponds to a shaded face of the bowtie decomposition, of which there are $2c=F$, as each crossing circle gives rise to exactly two shaded faces. Each shaded face of the bowtie decomposition is triangular, and thus each face of the nerve is also a triangle with one edge through each vertex of the shaded face. Each edge is shared by two faces, so the number of edges is $\frac{3}{2}2c=3c=E$. The number of vertices $V$ is the number of white faces in the decomposition. The Euler characteristic $\chi(\Sigma)$ of a genus $g$ surface is $2-2g$.

        Thus, we have:
        \begin{align*}
            \chi(\Sigma)&=V-E+F\\
            2-2g&=V-3c+2c\\
            V&=c+2-2g.
        \end{align*}
    \end{proof}

\begin{lemma}\label{lem:tri_prism}
    The volume of an ideal hyperbolic triangular prism is bounded above by $3v_{tet}$, where $v_{tet}=1.01494...$ is the volume of the regular ideal hyperbolic tetrahedron.
\end{lemma}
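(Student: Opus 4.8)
The plan is to cut the ideal triangular prism into three ideal tetrahedra and bound the volume of each by $v_{tet}$, using the classical fact that among all ideal hyperbolic tetrahedra the regular one has the largest volume (this follows from Milnor's formula for the volume of an ideal tetrahedron in terms of the Lobachevsky function, which is maximised when all three dihedral angles equal $\pi/3$).

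First I would fix notation. Regard the prism $P$ as a convex ideal polyhedron in $\HH^3$ that is combinatorially a triangular prism; as the convex hull of its six ideal vertices it has totally geodesic faces. Label the ideal vertices $a_1,a_2,a_3$ on one triangular face and $b_1,b_2,b_3$ on the other, with $a_i$ joined to $b_i$ by a side edge, so that $P$ has two ideal triangular faces and three ideal quadrilateral faces. Then I would cone from the vertex $a_1$: the faces of $P$ not containing $a_1$ are the triangle $b_1b_2b_3$ and the quadrilateral $a_2a_3b_3b_2$. Splitting the quadrilateral along the geodesic diagonal $a_2b_3$ (unambiguous, since the face lies in a single geodesic plane) and coning $a_1$ over each of the three resulting ideal triangles produces the three ideal tetrahedra
\[ a_1b_1b_2b_3,\qquad a_1a_2a_3b_3,\qquad a_1a_2b_2b_3, \]
which fan around the edge $a_1b_3$.

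Because $P$ is convex, this coning is a genuine geometric decomposition: the three tetrahedra have pairwise disjoint interiors and their union is $P$, so $\Vol(P)$ equals the sum of the three tetrahedral volumes. Each of the three tetrahedra has all four vertices ideal, hence has volume at most $v_{tet}$, and therefore $\Vol(P)\le 3v_{tet}$. I expect the only step needing genuine care to be the verification that the coning from $a_1$ tiles $P$ with disjoint interiors: this is standard for convex polyhedra (one checks each of the five faces of $P$ is covered and that consecutive tetrahedra abut along the interior triangles containing the edge $a_1b_3$), but it is the point where one uses convexity and totally geodesic faces in an essential way.
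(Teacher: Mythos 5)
Your proposal is correct and is essentially the paper's argument: both decompose the ideal prism into three ideal tetrahedra (the paper cuts off one vertex and then splits the remaining square-base pyramid along a diagonal, which is the same coning-from-a-vertex triangulation up to relabelling) and then bound each tetrahedron by $v_{tet}$. Your extra care about convexity and the planarity of the quadrilateral face when splitting along $a_2b_3$ is a welcome refinement but does not change the method.
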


    \begin{proof}
        Cut off one vertex of the triangular prism along the triangle through its three adjacent vertices; this forms one tetrahedron. What remains is a square-base pyramid, which may be decomposed into two further tetrahedra by splitting it along the triangle between the apex and a diagonal of the base. Each of these three ideal tetrahedra have volume bounded above by $v_{tet}$.
    \end{proof}

\begin{theorem}\label{thm:bounded_volume}
Let $\Sigma$ be a closed surface of genus at least two and $M$ be the trivial mapping torus $\Sigma\times S^1$. Let $L$ be a hyperbolic fully augmented link with projection $\pi(L)$ to $\Sigma\subset M$. The volume of the complement is bounded above by
\[\Vol(M\setminus L)\leq6(3c+2g-2)v_{tet}.\]
\end{theorem}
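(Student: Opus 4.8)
The plan is to bound $\Vol(M\setminus L)$ from above by decomposing $M\setminus L$ into ideal triangular prisms, to which Lemma~\ref{lem:tri_prism} and the standard straightening argument for ideal triangulations of cusped hyperbolic manifolds apply.

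First I would apply the bowtie decomposition of Proposition~\ref{prop:decomp}. Since $M=\Sigma\times S^1$ with $\Sigma$ a single closed surface, the curve $\Sigma\times\{\mathrm{pt}\}$ is non-separating, so $M\cut\Sigma$ is connected and homeomorphic to $\Sigma\times I$; thus the decomposition presents $M\setminus L$ as a single chunk $P\cong\Sigma\times I$, each of whose two boundary copies of $\Sigma$ carries the ideal checkerboard-colourable graph associated to $\pi(L)$: the $c+2-2g$ white faces (ideal polygons, by Lemma~\ref{lem:no._white_faces}) and the shaded faces, which are ideal triangles (as in the proof of Lemma~\ref{lem:no._white_faces}). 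Under the product identification of the two ends of $P$ this is one and the same cell structure on $\Sigma$, since the white faces are glued across by the identity and the two shaded triangles recording a given crossing circle sit in corresponding positions on the two ends.

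Next I would refine $P$ to a prism decomposition. A cellular fully augmented link on $\Sigma$ has exactly $3c$ ideal vertices visible on each copy of $\Sigma$: $c$ from the crossing circles, and $2c$ from the arcs into which the projection components are cut, since each crossing circle is met by exactly two projection strands and cellularity forces every projection component to meet a crossing circle (so no such ``arc'' is a whole closed component). Keeping the shaded triangles and subdividing each white polygon into ideal triangles by diagonals, introducing no new vertices, and doing this identically on the two ends of $P$, produces an ideal triangulation $\mathcal T$ of $\Sigma$ with $3c$ vertices. Any triangulation of a closed genus~$g$ surface with $V$ vertices has $2(V+2g-2)$ triangles, so $\mathcal T$ has $2(3c+2g-2)$ of them, and taking the product with $I$ decomposes $P\cong\Sigma\times I$ into $2(3c+2g-2)$ ideal triangular prisms. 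Reattached via the face pairings of the bowtie decomposition, these give an ideal decomposition of $M\setminus L$ into $2(3c+2g-2)$ ideal triangular prisms.

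Finally I would estimate the volume. Subdividing each prism into three ideal tetrahedra as in the proof of Lemma~\ref{lem:tri_prism} yields an ideal triangulation of $M\setminus L$ with $6(3c+2g-2)$ tetrahedra. Since $M\setminus L$ is hyperbolic, straightening this triangulation in the complete structure writes $\Vol(M\setminus L)$ as the sum of the signed volumes of the straightened (possibly degenerate) ideal tetrahedra, each of absolute value at most $v_{tet}$; equivalently, each straightened prism contributes at most $3v_{tet}$ by Lemma~\ref{lem:tri_prism}. Summing gives $\Vol(M\setminus L)\le 6(3c+2g-2)v_{tet}$, as claimed. The step I expect to be the main obstacle is the bookkeeping in the middle: pinning down the count of $3c$ ideal vertices on each copy of $\Sigma$ and, more delicately, checking that the two decorated copies of $\Sigma$ bounding $P$ are identically triangulated so that $P$ genuinely splits as a product of a triangulated $\Sigma$ with $I$ --- together with phrasing the straightening argument carefully enough that the possibly degenerate prisms still obey the $3v_{tet}$ bound and have signed volumes summing to the total.
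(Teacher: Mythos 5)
Your proposal is correct and follows essentially the same route as the paper: apply the bowtie decomposition to present $M\setminus L$ as $\Sigma\times I$ decorated identically on both ends, triangulate the faces, split the product into ideal triangular prisms, and bound each by $3v_{tet}$ via Lemma~\ref{lem:tri_prism}. The only difference is bookkeeping --- you count the $3c$ ideal vertices and invoke the Euler-characteristic formula for triangulations, while the paper counts the $6c$ edges and the $c+2-2g$ white faces --- and both yield the same $6c+4g-4=2(3c+2g-2)$ triangles per boundary; your added remark on straightening possibly degenerate prisms is a point the paper leaves implicit.
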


    \begin{proof}
        Apply the bowtie decomposition to $\pi(L)$. The result is a thickened surface $\Sigma\times I$ decorated identically on both boundaries with the decomposition of $\pi(L)$. Each white face is a (not necessarily geodesic) ideal $n$-gon that can be triangulated with $n-2$ ideal triangles. The bowtie decomposition has exactly $6c$ edges, as each crossing circle gives rise to exactly two triangular shaded faces, each of which is shared by a white face. By Lemma \ref{lem:no._white_faces}, there are $c+2-2g$ white faces; thus, triangulating all the white faces in this way produces $6c-2(c+2-2g)=4c+4g-4$ triangles. With the $2c$ shaded faces, each boundary is then decorated with an ideal triangulation with $6c+4g-4$ triangles. 

        Because both boundaries are decorated identically, every face of this triangulation matches exactly to a face on the opposite boundary of the thickened surface. Connect all matching vertices via an edge. Cutting along all the rectangles formed with the matching edges of the triangulation forms a triangular prism for every face of the triangulation. Each of these has volume bounded above by $3v_{tet}$ by Lemma \ref{lem:tri_prism}. As there are $6c+4g-4$ triangles on each boundary, the result follows.
    \end{proof}

\begin{remark}\label{rem:hyperelliptic_involution}
    By Theorem \ref{thm:infinite_volume}, there does not exist a linear bound on hyperbolic volume in terms of the number of crossing circles $c$ of a fully augmented link in any mapping torus $(\Sigma\times I)/\phi$, provided that there exists a nontrivial curve $\gamma\subset\Sigma$ such that $\phi(\gamma)\not\sim\pm\gamma$. By Theorem \ref{thm:bounded_volume}, links in $\Sigma\times S^1$, the trivial mapping torus, \emph{do} admit such a bound.
    
    There is an example of a map which is not covered by either of these two cases; that is, a map that is nontrivial, yet preserves the isotopy class of all simple closed curves --- the \emph{hyperelliptic involution} of a genus two surface. By \cite{hyperelliptic_involution}, this is the only such map. Neither of the techniques in this paper are conclusive on whether a link in the mapping torus of the genus two hyperelliptic involution admits a linear, or indeed any, upper volume bound.
\end{remark}

\bibliographystyle{amsplain}  
\bibliography{bibliography}

\providecommand{\bysame}{\leavevmode\hbox to3em{\hrulefill}\thinspace}
\providecommand{\MR}{\relax\ifhmode\unskip\space\fi MR }
\providecommand{\MRhref}[2]{%
  \href{http://www.ams.org/mathscinet-getitem?mr=#1}{#2}
}
\providecommand{\href}[2]{#2}
\begin{thebibliography}{10}

\bibitem{adams_generalize}
Colin Adams, Michele Capovilla-Searle, Darin Li, Lily~Qiao Li, Jacob McErlean, Alexander Simons, Natalie Stewart, and Xiwen Wang, \emph{Augmented cellular alternating links in thickened surfaces are hyperbolic}, Eur. J. Math. \textbf{9} (2023), no.~4, Paper No. 100, 26. \MR{4654988}

\bibitem{n-cusp_volume}
Colin~C. Adams, \emph{{Volumes of N-Cusped Hyperbolic 3-Manifolds}}, Journal of the London Mathematical Society \textbf{s2-38} (1988), no.~3, 555--565.

\bibitem{BlairFuterTomova}
Ryan Blair, David Futer, and Maggy Tomova, \emph{Essential surfaces in highly twisted link complements}, Algebr. Geom. Topol. \textbf{15} (2015), no.~3, 1501--1523. \MR{3361143}

\bibitem{construct_knots}
Urs Fuchs, Jessica~S. Purcell, and John Stewart, \emph{Constructing knots with specified geometric limits}, Pacific J. Math. \textbf{324} (2023), no.~1, 111--142. \MR{4604668}

\bibitem{volume_bound}
David Futer, Efstratia Kalfagianni, and Jessica~S. Purcell, \emph{Dehn filling, volume, and the {J}ones polynomial}, J. Differential Geom. \textbf{78} (2008), no.~3, 429--464. \MR{2396249}

\bibitem{canon_poly_decomp}
David Futer and Jessica~S. Purcell, \emph{Links with no exceptional surgeries}, Comment. Math. Helv. \textbf{82} (2007), no.~3, 629--664. \MR{2314056}

\bibitem{hyperelliptic_involution}
Andrew Haas and Perry Susskind, \emph{The geometry of the hyperelliptic involution in genus two}, Proc. Amer. Math. Soc. \textbf{105} (1989), no.~1, 159--165. \MR{930247}

\bibitem{alt_links_surfaces(chunk_decomp)}
Joshua~A. Howie and Jessica~S. Purcell, \emph{Geometry of alternating links on surfaces}, Transactions of the American Mathematical Society \textbf{373} (2020), no.~4, 2349--2397 (eng).

\bibitem{surface_volume_bound}
Efstratia Kalfagianni and Jessica~S. Purcell, \emph{Alternating links on surfaces and volume bounds}, Comm. Anal. Geom. \textbf{32} (2024), no.~1, 119--151. \MR{4806764}

\bibitem{kwon_generalize}
Alice Kwon, \emph{Fully augmented links in the thickened torus}, arXiv:2007.12773, 2020.

\bibitem{kwon_tham_generalize}
Alice Kwon and Ying~Hong Tham, \emph{Hyperbolicity of augmented links in the thickened torus}, Journal of Knot Theory and Its Ramifications \textbf{31} (2022), no.~04.

\bibitem{up_volume_bound}
Marc Lackenby, \emph{The volume of hyperbolic alternating link complements}, Proc. London Math. Soc. (3) \textbf{88} (2004), no.~1, 204--224, With an appendix by Ian Agol and Dylan Thurston. \MR{2018964}

\bibitem{intro_aug_links}
Jessica~S. Purcell, \emph{An introduction to fully augmented links}, Interactions between hyperbolic geometry, quantum topology and number theory, Contemp. Math., vol. 541, Amer. Math. Soc., Providence, RI, 2011, pp.~205--220. \MR{2796634}

\bibitem{doubledfullyaug}
Jessica~S. Purcell, Corbin Reid, and John Stewart, \emph{Geometry of fully augmented links in doubled 3-manifolds}, J. Knot Theory Ramifications \textbf{34} (2025), no.~6, Paper No. 2550017. \MR{4899485}

\bibitem{general_angled_chunks}
Corbin Reid, \emph{Angled chunk decompositions of fully augmented links in general 3-manifolds}, in preparation, 2025.

\bibitem{Thurston_Hyperbolization}
William~P. Thurston, \emph{The geometry and topology of three-manifolds}, American Mathematical Society, Providence, Rhode Island, 2022 (eng).

\end{thebibliography}

\end{document}